\documentclass[11pt,a4paper,leqno]{article}
\usepackage[margin=1.1in]{geometry}
\usepackage{amsmath,amsthm,amssymb,enumerate, bbm ,graphicx,color,caption,upgreek, float, tikz, wasysym, subcaption,booktabs,longtable, appendix,graphics, pdfpages,rotating,mathtools,textcomp, array, blkarray, 
	courier,
	 tkz-graph,
comment}

\usepackage{tikz}
\usetikzlibrary{arrows,shapes}

\usepackage[justification=justified,singlelinecheck=false]{caption}

\definecolor{donkergroen}{RGB}{46,148,0}
\usepackage[hidelinks]{hyperref} 
\definecolor{donkerrood}{RGB}{204,0,0}

\definecolor{blauw}{RGB}{61,158,255}
\definecolor{donkerblauw}{RGB}{0,0,255}
\definecolor{donkergroen}{RGB}{46,148,0}
\definecolor{donkerrood}{RGB}{204,0,0}

\usepackage{abstract}

\setlength{\absleftindent}{0pt}
\setlength{\absrightindent}{0pt}

\makeatletter
\newif\if@borderstar
\def\bordermatrix{\@ifnextchar*{%
\@borderstartrue\@bordermatrix@i}{\@borderstarfalse\@bordermatrix@i*}%
}
\def\@bordermatrix@i*{\@ifnextchar[{\@bordermatrix@ii}{\@bordermatrix@ii[()]}}
\def\@bordermatrix@ii[#1]#2{%
\begingroup
\m@th\@tempdima8.75\p@\setbox\z@\vbox{%
\def\cr{\crcr\noalign{\kern 2\p@\global\let\cr\endline }}%
\ialign {$##$\hfil\kern 2\p@\kern\@tempdima & \thinspace %
\hfil $##$\hfil && \quad\hfil $##$\hfil\crcr\omit\strut %
\hfil\crcr\noalign{\kern -\baselineskip}#2\crcr\omit %
\strut\cr}}%
\setbox\tw@\vbox{\unvcopy\z@\global\setbox\@ne\lastbox}%
\setbox\tw@\hbox{\unhbox\@ne\unskip\global\setbox\@ne\lastbox}%
\setbox\tw@\hbox{%
$\kern\wd\@ne\kern -\@tempdima\left\@firstoftwo#1%
\if@borderstar\kern2pt\else\kern -\wd\@ne\fi%
\global\setbox\@ne\vbox{\box\@ne\if@borderstar\else\kern 2\p@\fi}%
\vcenter{\if@borderstar\else\kern -\ht\@ne\fi%
\unvbox\z@\kern-\if@borderstar2\fi\baselineskip}%
\if@borderstar\kern-2\@tempdima\kern2\p@\else\,\fi\right\@secondoftwo#1 $%
}\null \;\vbox{\kern\ht\@ne\box\tw@}%
\endgroup
}
\makeatother

\makeatletter 
\newcommand\mynobreakpar{\par\nobreak\@afterheading} 
\makeatother

\newcommand{\N}{\mathbb{N}}
\newcommand{\Z}{\mathbb{Z}}

\usepackage[english]{babel}
\newtheorem{conjecture}{Conjecture}[section]
\newtheorem{theorem}{Theorem}[section]
\newtheorem{lemma}[theorem]{Lemma}
\newtheorem{claim}[theorem]{Claim}
\newtheorem{proposition}[theorem]{Proposition}

\theoremstyle{definition}
\newtheorem{defn}[theorem]{Definition}

\newtheorem{observation}[theorem]{Observation}

\newtheorem*{examp*}{Example}

\newtheorem{remark}[theorem]{Remark}

\theoremstyle{plain}

\makeatletter
\let\@fnsymbol\@arabic
\makeatother

\newfloat{Algorithm}{!hbt}{alg}

\newcounter{thm}[section]

\title{A topological characterization of Gauss codes}

\author{
	Llu\'{i}s Vena\thanks{Korteweg-De Vries Institute for Mathematics, University of Amsterdam. \texttt{lluis.vena@gmail.com}. The research leading to these
results has received funding from the European Research Council under the European Union’s Seventh Framework Programme (FP7/2007-2013) / ERC grant agreement \textnumero 339109.} 
}
\date{}

\selectlanguage{english}

\begin{document}

	\maketitle
	
	\begin{abstract}
	A (smooth) embedding of a closed curve on the plane with finitely many intersections is said to be \emph{generic} if each point of self-intersection is crossed exactly twice and at non-tangent angles.
	A finite word $\omega$ where each character occurs twice  is a \emph{Gauss code} if it can be obtained as the sequence of traversed self-intersections of a generic plane embedding of a close curved $\gamma$. Then $\gamma$ is said to realize $\omega$.
	

We present a characterization of Gauss codes using Seifert cycles. The characterization is
 given by an algorithm that, given a Gauss code $\omega$ as input, it outputs a combinatorial plane embedding of closed curve in linear time with respect to the number of characters of the word. The algorithm allows to find all the (combinatorial) embeddings of a closed curve in the plane that realize $\omega$.
 
The characterization involve two functions between graphs embedded on orientable surfaces, invertible of each other. One produces an embedding of the Seifert graph of the word (or paragraph), and the other is its inverse operation. These operations might be of independent interest.

	\end{abstract}

\section{Introduction}

A closed curve embedded on the plane with finitely many self-intersections (all of which are traversed exactly twice at non-tangent angles), give rise to a word where every character appears exactly twice using the following procedure.  Label each intersecting point with a different character of an alphabet, and record the characters traversed when moving along the curve with a certain direction from a certain starting point.  See Figure~\ref{fig.5} as an example.

Gauss \cite[pg. 282--286]{gauss00} asked to characterize which finite double occurrence words on an alphabet arise in this way. If a word can be realized as the self-intersections of a closed curve on the plane, then it is said to be a \emph{Gauss code}, or a \emph{realizable word}.

Gauss gave a necessary condition for a word to be a realizable: between any two copies of the same character, there should be an even number of characters, counted with multiplicity. However, the condition is not sufficient. For instance, the double occurrence word $1234534125$ satisfy Gauss' condition, yet no plane embedding of a closed curve induces it.
In \cite{nagy27}, Nagy gave additional necessary conditions for a word to be realizable. Necessary and sufficient conditions for the characterizations of Gauss codes, can be deduced from the work of Dehn
\cite{dehn1936kombinatorische} (see \cite{rosenstiehl1976gauss}). 

Several other characterizations, simplifications, reformulations and proofs have been found. For instance, in \cite{lovasz1976forbidden}, Lov\'asz and Marx present a characterization in terms of forbidden structures. In \cite{rosenstiehl1978principal}, Ronsestiehl and Read give a characterization in terms of properties of the interlacing graph (the vertices being the characters and two vertices are connected if the word contains a copy of one character between the two copies of the other), and the proof is algebraic. In \cite{de1999characterization}, De Fraysseix and Ossona de Mendez give a reformulation of the characterization by Rosenstiehl et al. \cite{rosenstiehl1978principal,rosenstiehl1978graphes} using an operation on words. See \cite{treybig1968characterization,godsil2013algebraic,rosenstiehl1976gauss,rosenstiehl1999new} for other characterizations and relevant works on the problem.

Many characterizations, such as the one described in \cite{godsil2013algebraic} and the one that can be deduced from \cite{dehn1936kombinatorische} (see \cite{rosenstiehl1976gauss}), give a recognition algorithm that runs in time $O(n^2)$, where $n$ is the number of characters of the word. In \cite{rosenstiehl1984gauss} a linear time recognition algorithm, $O(n)$, is given. Both algorithms can be adapted to explicitly give a valid embedding of a curve with an added $O(n)$ time.

\subsection{Main results}

We present another characterization of Gauss codes, Theorem~\ref{t.charact_intro}, by giving an algorithm that, given a Gauss code $\omega$, finds in $O(n)$ time a combinatorial embedding of a closed curve whose traversed self-intersections give $\omega$.
The algorithm works for double occurrence paragraphs with no modifications. A double occurrence paragraph is a set of words that, if concatenated, form a double occurrence word. A paragraph with $k$ words is said to be a Gauss paragraph if there is an embedding of $k$ closed curves such that traversing the intersections of the $i$-th curve gives
the $i$-th word.

The characterization is based on the properties of the Seifert cycles of a closed curve embedded on the plane. The Seifert cycles of a knot were introduced by Seifert \cite{seifert1935geschlecht,adams2004knot} in order to construct a surface whose boundary is a given knot. As an embedded closed curved can be interpreted as the plane shadow of a knot (resp. a link), the Seifert cycles can be defined for a plane embedding of a closed curve (resp. curves). Moreover, the definition of a Seifert cycle can be further extended to double occurrence words (resp. paragraphs) (see Definition~\ref{d.seif_cycles}).

The characterization algorithm can be summarized as follows.
 From the word (resp. paragraph) $\omega$ we construct $\text{Sf}(\omega)$ the Seifert map of $\omega$. The Seifert map is multigraph, possibly containing loops, $2$-cell embedded on an orientable surface (see Definition~\ref{d.seif_map}). If $\omega$ is a Gauss code (resp. paragraph), then a suitable orientation $\sigma$ of the edges of $\text{Sf}(\omega)$ allows
 to find a medial-like oriented $4$-regular plane map $\text{Mv}(\text{Sf}(\omega),\sigma)$ (see Definition~\ref{d.med_or}) which gives the combinatorial plane embedding of $\omega$. The embedded closed curves are given by the closed walks in $\text{Mv}(\text{Sf}(\omega),\sigma)$ where the next edge of the walk is the ``straight ahead'' edge. The algorithm $\text{Alg}(\cdot)$, described in Section~\ref{sec.give_algo}, gives the sought after orientation $\sigma$. The maps
 $\text{Sf}(\cdot)$ and $\text{Mv}(\cdot)$ defined in Section~\ref{sec.operations} are invertible of each other.

\begin{theorem}\label{t.charact_intro}
Let $\omega$ be a double occurrence paragraph. If $\omega$ is a Gauss paragraph, then the procedure $\text{Alg}(\text{Sf}(\omega))$ gives an orientation $\sigma$ on the edges of $\text{Sf}(\omega)$, the Seifert map of $\omega$, such that $\text{Mv}(\text{Sf}(\omega),\sigma)$, the vertex-medial map of the oriented map $(\text{Sf}(\omega),\sigma)$,  is a combinatorial plane embedding that realizes $\omega$.
\end{theorem}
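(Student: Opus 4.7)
The plan is to leverage the invertibility of $\text{Sf}(\cdot)$ and $\text{Mv}(\cdot,\cdot)$ asserted in Section~\ref{sec.operations} together with an existence argument coming from the hypothesis. Since $\omega$ is a Gauss paragraph, by definition there is a generic plane embedding $\Gamma$ of closed curves realizing $\omega$. Applying the Seifert construction directly to $\Gamma$ produces a multigraph $2$-cell embedded on an orientable surface, and because the Seifert cycles and the cyclic order in which they traverse crossings depend only on local combinatorial data at each double letter, this embedded multigraph coincides with $\text{Sf}(\omega)$ as defined from the word alone. The ambient plane embedding $\Gamma$ then equips the edges of $\text{Sf}(\omega)$ with a natural orientation $\sigma_{\Gamma}$, recording on which side of each crossing each Seifert cycle sits. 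By the invertibility statement $\text{Mv}(\text{Sf}(\Gamma),\sigma_{\Gamma})=\Gamma$, so \emph{some} orientation $\sigma$ of $\text{Sf}(\omega)$ already yields a planar vertex-medial map realizing $\omega$; the task is to verify that $\text{Alg}$ finds one.

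Next I would analyze $\text{Alg}$ as a local-propagation procedure on $\text{Sf}(\omega)$. The requirement that $\text{Mv}(\text{Sf}(\omega),\sigma)$ be a plane (genus-$0$) map translates into local constraints at each vertex and face of $\text{Sf}(\omega)$: once the orientation of one edge incident to a given Seifert cycle is fixed, the orientations of the adjacent edges are determined by the cyclic order at that vertex (so that each crossing in $\text{Mv}$ inherits the correct local structure) and by the requirement that every face of $\text{Sf}(\omega)$ lift to a face of the medial map. The algorithm performs a breadth-first propagation implementing exactly these forcings. The central lemma is a consistency statement: whenever two propagation paths meet at a common edge they impose the same orientation. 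This is where the existence of $\sigma_{\Gamma}$ is used — since $\sigma_{\Gamma}$ already satisfies every local constraint globally, any inconsistency detected by $\text{Alg}$ would also be an inconsistency for $\sigma_{\Gamma}$, contradicting its existence. Thus $\text{Alg}$ terminates with a well-defined orientation $\sigma$ such that $\text{Mv}(\text{Sf}(\omega),\sigma)$ is a planar $4$-regular map.

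Finally, the invertibility of $\text{Sf}$ and $\text{Mv}$ closes the argument: since $\text{Sf}(\text{Mv}(\text{Sf}(\omega),\sigma),\sigma)=\text{Sf}(\omega)$, the Seifert cycles of the closed walks obtained by the ``straight ahead'' rule in $\text{Mv}(\text{Sf}(\omega),\sigma)$ traverse the crossings in precisely the cyclic order encoded by $\text{Sf}(\omega)$, which is the order prescribed by $\omega$. Hence the resulting combinatorial plane embedding realizes $\omega$. The main obstacle I foresee is the consistency lemma for $\text{Alg}$; making the local planarity constraints explicit and checking that propagation never overconstrains an edge is the technical heart of the proof, and also what drives the $O(n)$ running time since each edge is touched a bounded number of times.
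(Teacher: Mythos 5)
Your overall skeleton matches the paper's: use the hypothesized plane embedding $\Omega$ of $\omega$ to produce, via invertibility of $\text{Sf}$ and $\text{Mv}$ (Proposition~\ref{prop.inv_op}), an orientation $\sigma_\Omega$ for which $\text{Mv}(\text{Sf}(\omega),\sigma_\Omega)$ is plane, and then argue that $\text{Alg}$ recovers a good orientation. The gap is in the middle step. You model $\text{Alg}$ as a deterministic local propagation whose forcings are consequences of planarity, and you argue consistency by saying any clash would contradict the existence of $\sigma_\Omega$. Neither half of this is right. First, the orientation is \emph{not} locally forced: at every cut vertex $v$ of $\text{Sf}(\omega)$ the algorithm chooses a plane embedding of the auxiliary graph $S_v$ (Step~\ref{step.planar}), i.e.\ an inside/outside assignment of the branches hanging off $v$, and different choices produce genuinely different orientations corresponding to genuinely different plane embeddings of $\omega$ (Proposition~\ref{prop.give_all_embeddings}). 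So the algorithm will in general \emph{not} output $\sigma_\Omega$, it will never detect an ``inconsistency'', and consistency of the propagation tells you nothing about planarity of the result. Second, planarity of $\text{Mv}(\text{Sf}(\omega),\sigma)$ is a global Euler-characteristic condition, not a conjunction of local constraints at vertices and faces: Observation~\ref{obs.counter} exhibits a word passing every local test (bipartiteness, planarity of all $S_v$, Gauss's parity condition) that is nonetheless not realizable.

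The paper closes exactly this gap with two ingredients your proposal does not supply. One is Proposition~\ref{prop.clar}, the chart translating the inside/outside containment of two Seifert cycles meeting at a crossing, together with their clockwise/counterclockwise orientations, into the vertex rotation and edge orientation at that crossing; this is what shows that the particular run of $\text{Alg}$ in which Step~\ref{step.planar} uses the $S_v$-embeddings induced by $\Omega$ reproduces $\sigma_\Omega$ (the $0/1$ labels track the cycle orientations, and the distance-to-root rule in the $2$-connected blocks tracks containment). The other is Lemma~\ref{lemma.dif_planar}, which shows by an explicit surgery (flip the submap of $\text{Mv}(M)$ sitting over the switched branches inside its face, then recount faces via Euler's formula, reduced to single switches by Claim~\ref{cl.4}) that changing the $S_v$-embedding at a cut vertex preserves planarity of the vertex-medial map, so the algorithm's arbitrary choices are harmless. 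Your final invertibility step, showing that the straight-ahead walks of $\text{Mv}(\text{Sf}(\omega),\sigma)$ spell out $\omega$, is fine and agrees with the paper.
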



Moreover, Proposition~\ref{prop.give_all_embeddings} characterize all the plane embeddings of a Gauss paragraph. In particular,
the algorithm can be modified to identify and give all possible plane embeddings of the paragraph (see Proposition~\ref{prop.give_all_embeddings}). In such case, the running time is proportional to the number of characters times the number of possible plane embeddings of the paragraph.

In \cite{lins1987gauss} Lins, Richtfr and Shank, considered $2$-cell embedding of a closed curve in different compact surfaces (not necessarily orientable) where the graph of the faces is $2$-colorable (the vertices are the faces, and two faces are connected if they are separated by an edge, which is the part of the curve connecting two consecutive intersection points). In this work
the closed curves $2$-cell embedded in orientable surfaces are not necessarily two-colorable.

\subsection{Organization}

Section~\ref{sec.prev} is devoted to introduce the notions and operations on maps that are used in the subsequent sections. 
In Section~\ref{sec.char} we describe the algorithm $\text{Alg}(\cdot)$ that gives an orientation to the edges of the map, and its properties are discussed in 
Section~\ref{sec.alg_cor}. 
 Finally, Section~\ref{sec.main_proof} is devoted to show the main result,
Theorem~\ref{t.characterization}, from which Theorem~\ref{t.charact_intro} can be deduced.

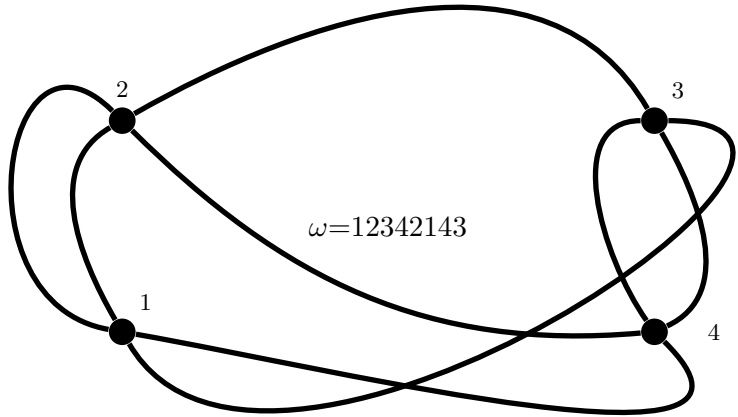
\begin{figure}[ht]
	\centering
		\begin{tikzpicture}[scale=0.7]
		\tikzstyle{vertex}=[circle,fill=black!100,minimum size=10pt,inner sep=0pt]
	\useasboundingbox (-7.3,-4.5) rectangle (210pt,130pt);
		
		\node[vertex,label=60:{\footnotesize{$1$}}] (c0) at (-5,-2) {};
		\node[vertex,label={\footnotesize{$2$}}] (c1) at (-5,2) {};
		\node[vertex,label=60:{\footnotesize{$3$}}] (c2) at (5,2) {};
		\node[vertex,label=right:{~~~\footnotesize{$4$}}] (c3) at (5,-2) {};

		
		\draw[line width=2pt] (c0) to [out=120, in=210] (c1);
		\draw[line width=2pt] (c1) to [out=30, in=120] (c2);
		\draw[line width=2pt] (c2) to [out=300, in=25] (c3);
		\draw[line width=2pt] (c3) to [out=185,in=315] (c1);
		\draw[line width=2pt] (c1) to [out=135,in=170,looseness=1.8] (c0);
		\draw[line width=2pt] (c0) to [out=350, in=315] (c3);
		\draw[line width=2pt] (c3) to [out=125, in=180] (c2);
		\draw[line width=2pt] (c2) to [out=0,in=300,looseness=1.2] (c0);
		
		\node (word) at (0,0) {$\omega$=12342143};
		\end{tikzpicture}
	\caption{
	An embeddable closed curve and its related word}\label{fig.5}
\end{figure}

\section{Preliminaries}\label{sec.prev}

\subsection{On words}

\begin{defn}[Double occurrence word, paragraphs]
	A \emph{word} is a sequence of characters of an alphabet.
	A \emph{cyclic word} is a class of words in the quotient space given by the ``being cyclically equivalent'' relation:
	\[\omega\sim u
	\iff |\omega|=|u| \text{ and } \exists i \text{ such that }\forall j\;
	\omega_{j+i\text{ mod } |\omega|}=u_{j \text{ mod } |\omega|}. 
	\]
	A \emph{double occurrence word} is a finite cyclic word in which every character appearing in the word appears exactly twice.	
	A \emph{double occurrence paragraph} is a set of finite words whose concatenation forms a double occurrence word.
\end{defn}

\begin{defn}[Gauss code]\label{d.gauss_code}
	A double occurrence word $\omega$ is said to be a \emph{Gauss code}, or a \emph{realizable word}
	if there exists an embedding $\epsilon$ of a closed curve in the sphere, with finitely many proper self-intersections (no tangent points, each crossing point is crossed exactly twice) with the following property. There is an assignment of the characters of $\omega$ to the crossing points of $\epsilon$ such that, traversing $\epsilon$ in a certain direction, traverses the characters in the cyclic order as in $\omega$.
	
	Similarly, a double occurrence paragraph $\omega'$ with $k$ words is said to be a \emph{Gauss paragraph}, or a \emph{realizable paragraph} if there is an embedding of $k$ closed curves on the sphere, each intersection point being crossed exactly twice non-tangently, and where the traversed intersection points in the $i$-th curve gives the $i$-th word.
%
%
	
\end{defn}

\subsection{On maps}
A map is a graph $2$-cell-embedded on a surface ($2$-dimensional compact manifold), where the vertices are points of the surface and the edges are curves between vertices possibly intersecting other edges only at its endpoints. All the surfaces considered in this work are orientable, and all the maps induce orientable surfaces. Thanks to Edmonds' theorem \cite{edmonds1960combinatorial}, a combinatorial description of a (oriented) map may be defined as follows.


\begin{defn}[Map, see \cite{lando2013graphs}]\label{d.map}
	A \emph{map} $M$ is a triple $(D;\tau,\alpha)$, where:
	
$D$ is the set of \emph{darts} of even size.

$\alpha:D\to D$ is an involution ($\alpha^2$ is the identity) with no fix points.

$\tau:D\to D$ is a permutation.
\end{defn}

		The \emph{edge set} of a map $M$, $E(M)$, is conformed by the pairs of darts $\{d,\alpha(d)\}$.
		The \emph{vertex set} of $M$, $V(M)$, is conformed by the cycles of the permutation $\tau$.
		The graph formed by the edge set and the vertex set of $M$ is known as the \emph{underlying graph} of $M$.	The \emph{face set} of $M$ is conformed by cycles of the permutation $\tau\alpha$ on the darts.
		The \emph{genus} of (an oriented map) $M$, $g(M)$, is given by Euler's relation $|V(M)|-|E(M)|+|F(M)|=2k(M)-2g(M)$, where $k(M)$ is the number of connected components of the underlying graph of $M$.
 		If a positive orientation is given to the surface, then the darts are drawn around the vertex (a point on the surface) in counterclockwise order (positive with respect to the orientation) following $\tau$.

\begin{defn}[Orientation on a map]\label{d.orient}
	An \emph{orientation} $\sigma$ on a map $M=(D;\tau,\alpha)$ is a function
	$\sigma: D\to \{-1,+1\}$ such that $\sigma(d)\sigma(\alpha(d))=-1$.
	A pair $(M,\sigma)$ 
	defines an \emph{oriented map}.
	\end{defn}

	An orientation of the map defines an orientation of the underlying graph where the edge $\{d,\alpha(d)\}$ is oriented from the dart in $\{d,\alpha(d)\}$ having image $-1$ (tail of the edge) to the dart in $\{d,\alpha(d)\}$ whose image is $+1$ (tip of the edge).

\begin{defn}[$2/2$-map]
	An oriented map $(M,\sigma)=((D;\tau,\alpha),\sigma)$ is said to be a \emph{$2/2$-map} if the orientation $\sigma$ is such 
		each vertex has two darts oriented $+1$, two darts oriented $-1$,
		and all the $+1$ darts are consecutive in the permutation cycles of $\tau$.
\end{defn}

\begin{observation}\label{obs.map_to_double_para}
A $2/2$-map $M$ induces an embedding of closed curves in a surface of genus $g(M)$ as follows. Each curve is given by the closed circuit obtained as a sequence of directed edges $(e_1,\ldots,e_k,e_1)$, where $e_{i+1}$ is the ``straight ahead'' edge leaving the vertex at the tip of $e_i$ (see Figure~\ref{fig.2}, where each vertex is character corresponds to a vertex, and the cyclic orientation of the map is given by the edges in counterclockwise cyclic order, the straight ahead orientation gives $\omega$). That is to say, the tail of $e_{i+1}$ and the tip of $e_i$ belong to the same permutation cycle, yet they are not consecutive one another. 
The vertices are the intersection points of the curves. If each vertex is given a different character, then a $2/2$-map induces a double occurrence paragraph using as words the sequence of vertices induced by the closed walks.
\end{observation}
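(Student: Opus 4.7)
The plan is to make the ``straight ahead'' rule precise at the level of darts, then show that iterating it partitions the edges into closed directed walks that assemble into the claimed embedding. At any vertex $v$, list the four darts in their $\tau$-cyclic order as $d_1,d_2,d_3,d_4$. By the $2/2$-condition, the two $+1$ darts are consecutive and so are the two $-1$ darts; after a cyclic rotation, say $d_1,d_2$ have image $+1$ (edge tips, arriving at $v$) and $d_3,d_4$ have image $-1$ (edge tails, leaving $v$). In the $2$-cell embedding of $M$ on its surface, the arriving dart $d_1$ continues ``straight through'' $v$ to the antipodal dart $d_3 = \tau^2(d_1)$, and likewise $d_2$ continues to $d_4 = \tau^2(d_2)$. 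Thus the straight-ahead correspondence is exactly $\tau^2$, and $\tau^2$ swaps $+1$ darts with $-1$ darts at every vertex.

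Define $\phi := \alpha\circ\tau^2$ on the set $D^{+} := \sigma^{-1}(+1)$ of $+1$ darts. Since $\tau^2$ sends a $+1$ dart to the antipodal $-1$ dart at the same vertex and $\alpha$ then moves to the $+1$ dart at the other endpoint of that edge, $\phi$ is a well-defined permutation of $D^{+}$, which has exactly one representative per edge. The disjoint cycles of $\phi$ therefore give the sequences $(d^{+}_1,\ldots,d^{+}_k)$ of tip-darts of the desired closed walks: the $i$-th directed edge is $\{\tau^2(d^{+}_i),\,d^{+}_i\}$, with tail $\tau^2(d^{+}_i)$ and tip $d^{+}_i$, and $e_{i+1}$ starts at $\tau^2(d^{+}_i)$, which shares the $\tau$-cycle of $v$ with $d^{+}_i$ but is not $\tau$-consecutive to it, matching the statement. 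Since each vertex contributes exactly two $+1$ darts, each vertex is visited in total exactly twice across all cycles of $\phi$.

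The remaining checks are topological. At any vertex $v$, the two passes use the antipodal dart-pairs $\{d_1,d_3\}$ and $\{d_2,d_4\}$ in the $\tau$-cyclic order at $v$, and these two pairs interleave cyclically. Hence, realised inside the orientable surface $\Sigma$ on which $M$ is $2$-cell embedded, the two local arcs meet transversally at $v$ rather than being tangent. Concatenating the directed edges of each $\phi$-cycle along their shared vertices therefore gives a closed curve drawn on $\Sigma$, and the whole collection has only transverse self- and mutual intersections, located exactly at the vertices of $M$; by Euler's formula $\Sigma$ has genus $g(M)$. Finally, giving each vertex a distinct character and reading the characters along the $j$-th cycle of $\phi$ produces a cyclic word $\omega_j$; since every character is visited once for each of its two $+1$ darts, the multiset $\{\omega_j\}$ is a double occurrence paragraph realised by the embedded curves. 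The only non-routine step in this plan is identifying ``straight ahead'' with $\tau^2$ and deducing transversality from the interleaving of antipodal dart-pairs; once this is pinned down, the rest is direct bookkeeping from Definitions~\ref{d.map} and~\ref{d.orient}.
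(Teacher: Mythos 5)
Your argument is correct and is precisely the formalization that the paper leaves implicit (the Observation is stated without proof): on the $4$-dart vertex cycles of a $2/2$-map, ``straight ahead'' is $\tau^2$, the closed walks are the cycles of $\alpha\circ\tau^2$ acting on the positive darts, and transversality at each vertex follows from the interleaving of the two antipodal dart-pairs. The only blemish is an index slip: the $i$-th directed edge is $\{\alpha(d_i^{+}),d_i^{+}\}=\{\tau^2(d_{i-1}^{+}),d_i^{+}\}$ rather than $\{\tau^2(d_i^{+}),d_i^{+}\}$ (the latter is a pair of darts at a single vertex, not an edge of $M$), though your following clause already states the correct relation for $e_{i+1}$.
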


%
%
%
%
%


\begin{defn}[$2/2$-premap]\label{}
	A $(P,\sigma)=((D;\overline{\tau},\alpha),\sigma)$ is said to be a \emph{$2/2$-premap with orientation $\sigma$} if $\overline{\tau}$ is a set of permutations on $D$ such that,
	for each $\tau$ in $\overline{\tau}$, $((D;\tau,\alpha),\sigma)$ is a $2/2$-map. Moreover, if
$\tau\in \overline{\tau}$ with permutation cycles $c_1,\ldots,c_l$, then
\[
\overline{\tau}=\left\{ \tau=c_1^{\epsilon_1}\; \cdots\;c_l^{\epsilon_l}\text { with }\epsilon_i\in \{+1,-1\} \text{ for all } i\in[l] \right\}.
\]
	\end{defn}

 In particular, if $(d_1\; \,d_2\;\,d_3\;\,d_4)$ is a cycle in $\tau\in \overline{\tau}$, with $d_1,d_2\in\sigma^{-1}(+1)\subset D,\; d_3,d_4\in\sigma^{-1}(-1)$, then any other permutation in $\overline{\tau}$ contains either $(d_2\;\,d_1\;\,d_4\;\,d_3)$ or $(d_1\;\,d_2\;\,d_3\;\,d_4)$ as a cycle. 


\begin{observation}\label{obs.algo_algo}
	Let $(P,\sigma)=((D;\overline{\tau},\alpha),\sigma)$ be an $2/2$-premap.
	\begin{enumerate}
		\item $(P,\sigma)$ induces a graph $G=(V,E)$ as follows. The set of vertices $V$ are the permutation cycles of any $\tau\in\overline{\tau}$. The mapping $\nu:D\to V$ assigns, to the dart $d$, the vertex corresponding to the permutation cycle containing $d$. The edges correspond to the pairs $\{d,\alpha(d)\}_{d\in D}$ and join the vertices $\nu(d)$ and $\nu(\alpha(d))$. The graph $G$ only depends on the set of maps $\overline{\tau}$ and not on the particular chosen map $\tau\in \overline{\tau}$.
		
				\item \label{prop.orient_3} 
				In a $2/2$-map, every dart $d$ have the property that $\sigma(\tau(d))\sigma(\tau^{-1}(d))=-1$.  Thus any dart $d$
				has a unique neighbour with the same orientation and a unique neighbor with the opposite orientation.
				These unique darts are the same for any $2/2$-map representative of $(P,\sigma)$.
				 Hence, we may define the  map $\rho$ for $2/2$-premaps as follows. Given any permutation $\tau\in \overline{\tau}$ of the premap and a dart $d$,
		\[
		\begin{cases}
		\rho(d)=\alpha(\tau(d)) &\text{ if }\;
		\sigma(d)=+1 \text{ and } \sigma(\tau(d))=-1\\
		
		\rho(d)=\tau(\alpha(d)) &\text{ if }\;
		\sigma(d)=-1 \text{ and } \sigma(\tau(\alpha(d)))=-1\\

		\rho(d)=\alpha(\tau^{-1}(d)) &\text{ if }\;
		\sigma(d)=+1 \text{ and } \sigma(\tau(d))=+1\\
		\rho(d)=\tau^{-1}(\alpha(d)) &\text{ if }\;		\sigma(d)=-1 \text{ and } \sigma(\tau(\alpha(d)))=+1\\
		\end{cases}
		\]		
		Observe that $\sigma(d)\sigma(\rho(d))=1$.
		In Figure~\ref{fig.2}, $\rho$ of the dart at $2$ from the edge $(2,3)$ is the dart at $3$ from the edge $(3,4)$ since we are in the fourth case.
%


%
%
%
%
%

		
		\item\label{obs.algo} For any $2/2$-map $\tau\in \overline{\tau}$, and any dart $d$, the dart in $\nu(d)$ with opposite sign to $d$ and not consecutive to $d$ (in the permutation cycle of $\nu(d)$, the following ``straight ahead'' edge) is the same (does not depend on $\tau$, but on $\overline{\tau}$). Therefore the double occurrence paragraph defined using Observation~\ref{obs.map_to_double_para}
		is independent of $\tau$, and depends only on $\overline{\tau}$.
		
		Moreover, a $2/2$-premap codifies each of the possible embeddings (where each point of intersection of the curves is non-tangent and traversed exactly twice) of the paragraph (defined by $\overline{\tau}$). 
	\end{enumerate}
\end{observation}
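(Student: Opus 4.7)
The plan is to attack the three parts of the observation by exploiting the single structural fact built into the definition of a $2/2$-premap: any two permutations $\tau,\tau'\in\overline{\tau}$ differ only by reversing some subset of their cycles, so they share the same cycles \emph{as sets of darts} and the same unordered pair of cyclic neighbors at every dart. First I would settle part (1): the vertex set of $G$ is the set of $\tau$-cycles viewed as subsets of $D$, and reversing a cycle $c_i$ to $c_i^{-1}$ does not change this subset; the edge set is described by $\alpha$, which is common to the premap. So $G$ is intrinsic to $\overline{\tau}$.

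For part (2), I would show that the ``opposite-sign cyclic neighbor'' of each dart $d$ is a well-defined object independent of $\tau\in\overline{\tau}$. Because a $2/2$-map has four darts per vertex, two consecutive $+1$s followed by two consecutive $-1$s, the two cyclic neighbors $\tau(d)$ and $\tau^{-1}(d)$ of any dart $d$ carry opposite signs, so exactly one of them is the same-sign neighbor and the other is the opposite-sign one. Reversing the cycle containing $d$ swaps $\tau(d)$ and $\tau^{-1}(d)$ but leaves the unordered pair fixed; hence the opposite-sign neighbor is an attribute of $d$, not of the representative $\tau$. Then a short case analysis on the four lines of the definition shows that $\rho(d)$ is always $\alpha$ applied to this opposite-sign neighbor, which also immediately yields $\sigma(d)\sigma(\rho(d))=+1$ from $\sigma(d)\sigma(\alpha(d))=-1$.

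Part (3) would follow by a similar argument: the ``straight ahead'' dart at $d$ is the unique dart of the vertex $4$-cycle that has sign opposite to $d$ and is \emph{not} cyclically adjacent to $d$, i.e.\ the element antipodal to $d$ in the $4$-cycle. Since ``antipodal in a $4$-cycle'' is preserved by cyclic reversal, the straight-ahead rule is common to every $\tau\in\overline{\tau}$, and so the sequence of visited vertices produced by Observation~\ref{obs.map_to_double_para} is the same for all representatives. For the final assertion — that $\overline{\tau}$ codifies all embeddings inducing the paragraph — I would argue that an oriented embedding realizing the paragraph is determined by the cyclic order of darts at each vertex subject to the $2/2$ sign-block constraint, and that constraint leaves exactly two rotations per vertex (one the reverse of the other), matching the $2^l$ sign choices of the $\epsilon_i$.

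The main obstacle I expect is the bookkeeping in part (2): the definition of $\rho$ branches on the sign of $d$ and on whether $\sigma(\tau(d))$ or $\sigma(\tau(\alpha(d)))$ equals the sign of $d$, and I need to check that reversing the cycle containing $d$ (or the cycle containing $\alpha(d)$) swaps each case with exactly the case that returns the same dart. Once that is tabulated, the remaining work is a clean repackaging of the principle that cycle reversal preserves the underlying unordered cyclic structure, and parts (1) and (3) are almost immediate consequences.
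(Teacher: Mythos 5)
Your overall strategy is the right one and is essentially the paper's own (the paper states this as an observation whose justification is the inline remark that any two members of $\overline{\tau}$ differ only by reversing cycles, hence share the same cycles as dart sets and the same unordered pair of cyclic neighbours at each dart). Parts (1) and (3) as you describe them are fine: the vertex/edge data of $G$ depend only on the cycle supports and on $\alpha$, and the ``straight ahead'' dart is the antipodal element of the $4$-cycle, which is invariant under reversal. Your account of the final assertion of part (3) --- two admissible rotations per crossing, reverses of one another, matching the $2^l$ choices of the $\epsilon_i$ --- is also the intended argument.

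There is, however, one concrete error in your part (2): it is not true that $\rho(d)$ is ``always $\alpha$ applied to the opposite-sign neighbour of $d$.'' That description is correct only in the two cases with $\sigma(d)=+1$ (there $\rho(d)=\alpha(\tau(d))$ or $\alpha(\tau^{-1}(d))$, whichever of $\tau(d),\tau^{-1}(d)$ is negative). In the two cases with $\sigma(d)=-1$ the definition gives $\rho(d)=\tau^{\pm1}(\alpha(d))$, i.e.\ the \emph{negative cyclic neighbour of $\alpha(d)$ at the vertex $\nu(\alpha(d))$}, with no outer $\alpha$; your uniform formula would instead produce a dart at the vertex $\nu(d)$'s other endpoint via $\alpha(\tau^{\mp1}(d))$, which is a different dart at a different vertex. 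The correct uniform statement is: for positive $d$, $\rho(d)$ is $\alpha$ of the unique negative cyclic neighbour of $d$; for negative $d$, $\rho(d)$ is the unique negative cyclic neighbour of $\alpha(d)$. Both descriptions are invariant under cycle reversal (reversal swaps case 1 with case 3, and case 2 with case 4, returning the same dart, exactly as your ``main obstacle'' paragraph anticipates), and both give $\sigma(\rho(d))=\sigma(d)$, so the conclusions of the observation survive; but as written your intermediate claim would compute $\rho$ wrongly on every negative dart, and it should be replaced by the case-paired check you already sketched.
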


\begin{observation}[$2/2$-premap from a paragraph]\label{obs.orient_premap_from_word}
	The double occurrence paragraph $\omega$ induces a $2/2$-premap $((D;\overline{\tau},\alpha),\sigma)$ as follows.
	The set of darts $D$ is $\{\omega_{(i,+)},\omega_{(i,-)}\}_{i\in [1,|\omega|]}$, two for each copy of each character in $\omega$.
	
	$\sigma(\omega_{(i,-)})=-1$ and $\sigma(\omega_{(i,+)})=+1$ for each $i\in[1,|\omega|]$.
	
	If $\omega_i$ and $\omega_j$, are the two copies of the same character in $\omega$, then for each $\tau\in \overline{\tau}$ either
	$(\omega_{(i,+)}\,\;\omega_{(j,+)}\,\;\omega_{(i,-)}\,\;\omega_{(j,-)})$ or
	$(\omega_{(j,+)}\;\,\omega_{(i,+)}\,\;\omega_{(j,-)}\,\;\omega_{(i,-)})$
	is a permutation cycle of $\tau$.

	
	$\alpha(\omega_{(i,+)})=\omega_{(p(i),-)}$ and $\alpha(\omega_{(i,-)})=\omega_{(n(i),\cdot)}$, where $\omega_{(p(i),\cdot)}$ and $\omega_{(n(i),-)}$ denote, respectively, the previous and next characters of $\omega_{i}$ in its word.
\end{observation}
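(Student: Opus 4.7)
The plan is to unpack the definition of a $2/2$-premap and verify each of its requirements on the combinatorial data attached to $\omega$, namely: (a) $\alpha$ is a fixed-point-free involution on $D$; (b) $\sigma$ is an orientation, i.e.\ $\sigma(d)\sigma(\alpha(d)) = -1$ for every $d$; (c) for each admissible $\tau \in \overline{\tau}$ the triple $((D;\tau,\alpha),\sigma)$ is a $2/2$-map; and (d) $\overline{\tau}$ coincides with the family $\{c_1^{\epsilon_1}\cdots c_l^{\epsilon_l} : \epsilon_i \in \{\pm 1\}\}$ obtained from any fixed $\tau_0 \in \overline{\tau}$ by independently reversing its cycles.

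I would first dispose of (a) and (b). Because $\alpha$ sends every $+$-dart to a $-$-dart and vice versa, it has no fixed point and $\sigma(d)\sigma(\alpha(d)) = -1$ is immediate by inspection. For the involution property, I take the predecessor $p$ and successor $n$ to be defined cyclically within each individual word of the paragraph (so that $n \circ p = p \circ n$ is the identity on the positions of that word); then $\alpha(\alpha(\omega_{(i,+)})) = \alpha(\omega_{(p(i),-)}) = \omega_{(n(p(i)),+)} = \omega_{(i,+)}$, and symmetrically on $-$-darts.

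Next I would verify (c). For every character of $\omega$ with the two occurrences indexed by $i$ and $j$, the prescribed $4$-cycle involves the four darts $\omega_{(i,\pm)},\omega_{(j,\pm)}$ and is disjoint from all other prescribed cycles; since there are $|\omega|/2$ characters and $2|\omega|$ darts, these cycles partition $D$, so each admissible $\tau$ is a genuine permutation of $D$. Within each such $4$-cycle both $+$-darts occupy the first two positions and both $-$-darts the last two, which is precisely the $2/2$ sign pattern required at every vertex. For (d), I would observe that the two patterns $(\omega_{(i,+)}\,\omega_{(j,+)}\,\omega_{(i,-)}\,\omega_{(j,-)})$ and $(\omega_{(j,+)}\,\omega_{(i,+)}\,\omega_{(j,-)}\,\omega_{(i,-)})$ are each other's inverses as cyclic classes, so reversing a single cycle of a fixed $\tau_0$ swaps them, and since distinct characters give disjoint cycles the choices can be made independently; this yields exactly the product description required by the premap definition.

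No individual step is technically difficult; the argument is entirely combinatorial unpacking of the definitions. The only point that warrants care, and that I would flag explicitly, is the interpretation of $p$ and $n$ when $\omega$ is a genuine paragraph rather than a single word: they must be taken cyclically \emph{within each word}, for otherwise $\alpha$ would fail to square to the identity at the boundaries between consecutive words of the paragraph.
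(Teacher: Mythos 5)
Your verification is correct and takes exactly the approach the paper intends: the paper states this as an Observation with no written proof, and your checks --- $\alpha$ is a fixed-point-free involution, $\sigma(d)\sigma(\alpha(d))=-1$, the prescribed $4$-cycles are disjoint, cover $D$, and carry the sign pattern of two consecutive $+$ darts followed by two consecutive $-$ darts, and the two allowed cycles per character are mutually inverse so that $\overline{\tau}$ has the required product structure --- are precisely the routine unpacking that justifies it. One minor caveat on your final remark: if $p$ and $n$ were instead taken cyclically on a concatenation of the words, $\alpha$ would still square to the identity; the true reason they must be read within each word is that the edges must encode the arcs of each separate closed curve, so that the premap realizes the paragraph rather than the single concatenated word.
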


In Figure~\ref{fig.2} (and Figure~\ref{fig.5}) we see one of the maps in the $2/2$-premap corresponding to the word $\omega=12342143$ in which, for instance $\{\omega_{(5,+)},\omega_{(5,-)}\}$ corresponds to two darts associated to the second copy of the character $2$, and, together with the two darts $\{\omega_{(2,+)},\omega_{(2,-)}\}$, they generate the permutation cycle
$(\omega_{(2,+)},\omega_{(5,+)},\omega_{(2,-)},\omega_{(5,-)})=((1\to2),(4\to2),(2\to3),(2\to 1))$ which corresponds to the vertex labeled $2$ in the figure.

%
%
%

\begin{defn}[Seifert cycles of a $2/2$-(pre)map]\label{d.seif_cycles}
	Let $(M,\sigma)=((D;\tau,\alpha),\sigma)$ be a $2/2$-map.
Then 
	\begin{itemize}
		\item The \emph{Seifert cycle} of the dart $d$, $S(d)$, is defined as the (cyclically ordered) sequence of unrepeated, equally-oriented darts \[(d\,\;\rho(d)\,\;\rho^2(d)\,\;\cdots\,\;\rho^{-1}(d)).\]
		\item The \emph{Seifert cycles of $(M,\sigma)$} is the set $\mathcal{S}(M,\sigma)=\{S(d)\}_{d\in D(M),\sigma(d)=+1}$, where two sequences are the same if they are cyclically equivalent.
		\end{itemize}
	Seifert cycles are similarly defined for $2/2$-premaps $(P,\sigma)$ as $\rho$ is well defined for $2/2$-premaps.	
\end{defn}

\begin{observation}
	If the $2/2$-map is plane, then the Seifert cycles correspond to the Seifert cycles of the shadow of the knot (or link) it induces.
\end{observation}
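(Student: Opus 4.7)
The plan is to verify that the combinatorial Seifert cycles from Definition \ref{d.seif_cycles} coincide with the geometric Seifert circles obtained by smoothing every crossing of the embedded oriented shadow. Since a plane $2/2$-map $(M,\sigma)$ induces by Observation \ref{obs.map_to_double_para} a set of closed curves embedded in the sphere, and the orientation $\sigma$ gives each curve a direction (edges run from the $-1$ dart to the $+1$ dart), this collection is precisely the shadow of an oriented link. The Seifert construction on this shadow smooths each crossing by replacing the two transversal strands with two arcs joining each incoming strand to the adjacent outgoing strand that continues the orientation, and the Seifert circles are the connected components of the resulting $1$-manifold.

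I would then translate this picture into the dart formalism. Because $(M,\sigma)$ is a $2/2$-map, at every vertex $v$ the four darts read in the cyclic order given by $\tau$ as $(+,+,-,-)$; the two $+1$ darts are the tips of the two incoming edges at $v$ and the two $-1$ darts are the tails of the two outgoing edges. In the plane embedding this cyclic order is the geometric counter-clockwise order around $v$, and the smoothing that continues the orientation at $v$ connects each tip $d$ to the unique tail among $\{\tau(d),\tau^{-1}(d)\}$. The unique such tail is well defined because exactly one of $\tau(d),\tau^{-1}(d)$ has sign $-1$, by the consecutivity condition in the definition of a $2/2$-map.

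The core of the proof is to check that $\rho$ realizes precisely ``smooth at $v$, then follow the edge to the next vertex''. For $\sigma(d)=+1$ the two cases in Observation \ref{obs.algo_algo}(\ref{prop.orient_3}) give $\rho(d)=\alpha(\tau(d))$ or $\rho(d)=\alpha(\tau^{-1}(d))$, depending on which of the neighbours of $d$ in $\tau$ is the adjacent tail; in either case we first move to the unique adjacent oppositely-signed dart (this is the Seifert smoothing at $v$) and then apply $\alpha$ to cross the edge to its other endpoint, landing on the tip at the next vertex. An analogous argument handles $\sigma(d)=-1$. Iterating $\rho$ from a $+1$ dart therefore traces out exactly one geometric Seifert circle in the plane shadow, and since $\rho$ preserves sign the $+1$ orbits index these circles without repetition, matching Definition \ref{d.seif_cycles}.

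The only genuine subtlety is to justify that the combinatorial rotation $\tau$ around each vertex agrees with the geometric counter-clockwise rotation of the planar embedding, so that ``adjacent in $\tau$'' is the same as ``adjacent at the crossing in the plane''. This is exactly the content of Edmonds' theorem cited in Section~\ref{sec.prev}: when $M$ is a plane $2/2$-map, the rotation system $\tau$ encodes the cyclic order of edge ends around each vertex in the embedding, and then the smoothing convention described above for the geometric shadow coincides edge by edge with the action of $\rho$.
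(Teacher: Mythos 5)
Your argument is correct: the key check, that for a $+1$ dart $d$ the map $\rho$ first passes to the unique adjacent oppositely-signed dart in the rotation at $\nu(d)$ (the orientation-compatible smoothing at the crossing) and then applies $\alpha$ to follow the edge to the next tip, is exactly what makes the combinatorial orbits of $\rho$ coincide with the geometric Seifert circles. The paper states this observation without proof, treating it as immediate from Definition~\ref{d.seif_cycles} and Observation~\ref{obs.algo_algo}, so your write-up is simply the explicit definition-unwinding the paper leaves implicit; there is no divergence in approach.
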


\subsection{Operations on oriented maps}\label{sec.operations}


\begin{defn}[Seifert (un)oriented map from a $2/2$-(pre)map)]\label{d.seif_map}
	Let $(P,\sigma)=((D;\overline{\tau},\alpha),\sigma)$ be a $2/2$-premap.  The \emph{Seifert map} $\text{Sf}(P,\sigma)=M'=(D';\tau',\alpha')$ is defined as follows.
		
		The vertices of $M'$ are the Seifert cycles of $(P,\sigma)$, so $V(M')=\mathcal{S}(P,\sigma)$.
		
		The darts of $M'$ are the positively oriented darts of $P$. $D'=\sigma^{-1}(+1)\cap D$.
		
		
		For $d_1,d_2\in D'\subset D$, $\alpha'(d_1)=d_2$ if and only if $\nu_{(P,\sigma)}(d_1)=\nu_{(P,\sigma)}(d_2)$.

		
		For each  $(d_1,\ldots,d_k)\in V(M')$, then
		 $\tau'(d_i)=d_{i+1}$ with $i$ and $i+1$ being modulo $k$.


If $(M,\sigma)=((D;\tau,\alpha),\sigma)$ is a $2/2$-map with $\tau\in \overline{\tau}$, then the orientation $\sigma'$ of the \emph{Seifert oriented map} $\text{Sf}(M,\sigma)=(M',\sigma')=(\text{Sf}(P,\sigma),\sigma')$ is given by, for $d\in D'$,
\[
\begin{cases}
\sigma'(d)=+1 &\text{ if }\sigma(\tau(d))=-1 \\
\sigma'(d)=-1 &\text{ if }\sigma(\tau(d))=+1
\end{cases}\] 
\end{defn}

The Seifert cycles and the Seifert map of a paragraph are the Seifert cycles and the Seifert map of its associated $2/2$-premap.
Additionally, every map is the Seifert map of a double occurrence paragraph (see Proposition~\ref{prop.inv_op} and Observation~\ref{obs.algo_algo} point \ref{obs.algo}). See Figure~\ref{fig.2} and Figure~\ref{fig.2} for the example related to Figure~\ref{fig.5}.

\begin{observation}
	In the Seifert map, each vertex is associated with a Seifert cycle $c=(e_1,\ldots,e_k)$ and has, as attached edges, the edges $e_1,\ldots,e_k$ (the positive darts) corresponding to the cycle, and whose cyclic orientation is that of $c$. Two Seifert cycles (vertices) are connected by an edge in the Seifert map if they meet at a vertex of the $2/2$-premap. See Figure~\ref{fig.2} and Figure~\ref{fig.1}.
\end{observation}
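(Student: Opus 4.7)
The plan is to verify the observation by simply unpacking Definition~\ref{d.seif_map} and matching it to the four stated claims. Since $\text{Sf}(P,\sigma) = (D';\tau',\alpha')$ is given by an explicit combinatorial construction, each claim should follow directly from reading off the relevant piece of the definition; the main (mild) obstacle is keeping straight the identification of a positive dart of $P$ with itself as a dart of $M'$, and checking that the permutation cycles of $\tau'$ coincide with the Seifert cycles.

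First I would address the vertex/dart correspondence. By definition $V(M') = \mathcal{S}(P,\sigma)$, so every vertex of the Seifert map is literally a Seifert cycle $c = (e_1,\dots,e_k)$, proving the first part. For the attached edges, I would recall that the darts attached to a vertex of a map (in the sense of Definition~\ref{d.map}) are exactly the darts appearing in the $\tau'$-cycle corresponding to that vertex. By the definition of $\tau'$, the $\tau'$-cycle of the vertex $c$ is precisely $(e_1\,e_2\,\cdots\,e_k)$, so the attached darts are $e_1,\dots,e_k$ and their cyclic order around the vertex is the cyclic order of the Seifert cycle $c$. This handles the second and third claims.

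Second, I would handle adjacency. By the definition of $M'$, the edges of $\text{Sf}(P,\sigma)$ are the pairs $\{d_1,\alpha'(d_1)\}$ with $d_1 \in D' = \sigma^{-1}(+1)\cap D$, and $\alpha'(d_1) = d_2$ iff $\nu_{(P,\sigma)}(d_1) = \nu_{(P,\sigma)}(d_2)$, i.e.\ iff the two positive darts lie at the same vertex of the premap $P$. Thus two vertices $c_1,c_2$ of $M'$ are joined by an edge precisely when there exist positive darts $d_1 \in c_1$ and $d_2 \in c_2$ sitting at a common vertex of $P$; this is the statement that the Seifert cycles ``meet at a vertex of the $2/2$-premap.''

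Finally I would note, as a sanity check rather than a further argument, that $\alpha'$ is well defined: by the $2/2$-premap structure each vertex of $P$ has exactly two positive darts, so the relation $\nu(d_1)=\nu(d_2)$ on $D'$ is indeed a fixed-point-free involution, and each vertex of $P$ contributes exactly one edge of $\text{Sf}(P,\sigma)$ between the two Seifert cycles to which its two positive darts belong. Figures~\ref{fig.2} and~\ref{fig.1} then illustrate the resulting combinatorial picture. Since every claim in the observation is read directly off Definition~\ref{d.seif_map}, the ``proof'' is essentially a reformulation, and no substantive obstacle arises beyond bookkeeping.
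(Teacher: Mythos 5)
Your proposal is correct and matches the paper's treatment: the paper states this as an immediate Observation following Definition~\ref{d.seif_map} (with no separate proof), and your argument is exactly the intended unpacking of that definition --- vertices are Seifert cycles, the $\tau'$-cycles give the attached darts in the cycle's order, and $\alpha'$ pairs the two positive darts at each premap vertex, yielding the adjacency claim. Your sanity check that $\alpha'$ is a fixed-point-free involution (since each vertex of a $2/2$-premap carries exactly two positive darts) is a worthwhile detail the paper leaves implicit.
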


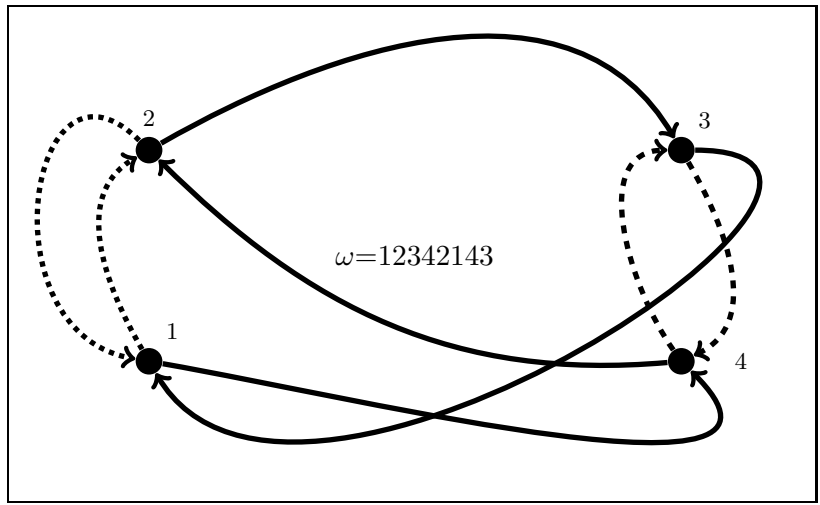
\begin{figure}[ht]
	\centering
	\fbox{
	\begin{tikzpicture}[scale=0.7, shorten >=0.0pt,->]
	\tikzstyle{vertex}=[circle,fill=black!100,minimum size=10pt,inner sep=0pt]
	\useasboundingbox (-7.3,-4.5) rectangle (210pt,130pt);

	\node[vertex,label=60:{\footnotesize{$1$}}] (c0) at (-5,-2) {};
	\node[vertex,label={\footnotesize{$2$}}] (c1) at (-5,2) {};
	\node[vertex,label=60:{\footnotesize{$3$}}] (c2) at (5,2) {};
	\node[vertex,label=right:{~~~\footnotesize{$4$}}] (c3) at (5,-2) {};

	
	\draw[dotted,line width=2pt,] (c0) to [out=120, in=210] (c1);
	\draw[line width=2pt] (c1) to [out=30, in=120] (c2);
	\draw[dashed,line width=2pt] (c2) to [out=300, in=25] (c3);
	\draw[line width=2pt] (c3) to [out=185,in=315] (c1);
	\draw[dotted,line width=2pt] (c1) to [out=135,in=170,looseness=1.8] (c0);
	\draw[line width=2pt] (c0) to [out=350, in=315] (c3);
	\draw[dashed,line width=2pt] (c3) to [out=125, in=180] (c2);
	\draw[line width=2pt] (c2) to [out=0,in=300,looseness=1.2] (c0);
	
	\node (word) at (0,0) {$\omega$=12342143};
	\end{tikzpicture}}
	\caption{Seifert cycles}\label{fig.2}
\end{figure}

	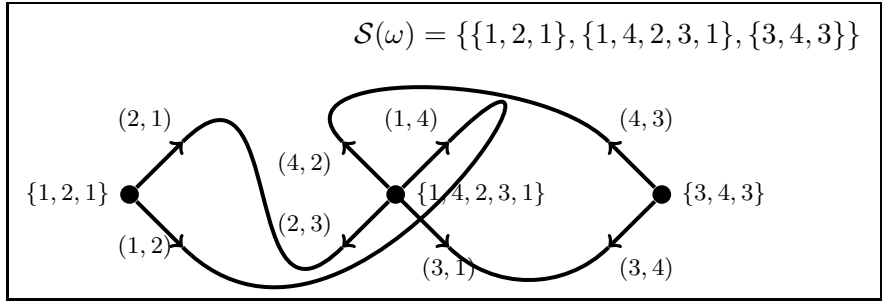
\begin{figure}[ht]
		\centering
		\fbox{\begin{tikzpicture}[scale=0.7]
		\tikzstyle{vertex}=[circle,fill=black!100,minimum size=7pt,inner sep=0pt]
		
		\node (seif) at (4,3) {$\mathcal{S}(\omega)=\{\{1,2,1\},\{1,4,2,3,1\},\{3,4,3\}\}$};

		\node[vertex,label=0:{\footnotesize{$\{1,4,2,3,1\}$}}] (v0) at (0,0) {};
		\node[vertex,label=0:{\footnotesize{$\{3,4,3\}$}}] (v1) at (5,0) {};
		\node[vertex,label=180:{\footnotesize{$\{1,2,1\}$}}] (v2) at (-5,0) {};
		
		
		\coordinate[label=270:{\footnotesize{$(3,1)$}}] (d1) at (1,-1);
		\coordinate[label=120:{\footnotesize{$(1,4)$}}] (d2) at (1,1);
		\coordinate[label=230:{\footnotesize{$(4,2)$}}] (d3) at (-1,1);
		\coordinate[label=120:{\footnotesize{$(2,3)$}}] (d4) at (-1,-1);
		
		\coordinate[label=60:{\footnotesize{$(4,3)$}}] (d7) at (4,1);
		\coordinate[label=315:{\footnotesize{$(3,4)$}}] (d8) at (4,-1);
		
		\coordinate[label=180:{\footnotesize{$(1,2)$}}] (d5) at (-4,-1);
		\coordinate[label=120:{\footnotesize{$(2,1)$}}] (d6) at (-4,1);
		

		\useasboundingbox (0,0) rectangle (10pt,10pt);
		
		\draw[->,line width=1.5pt] (0,0) -- (1,-1);
		\draw[->,line width=1.5pt] (v0) -- (d2);
		\draw[->,line width=1.5pt] (v0) -- (d3);
		\draw[->,line width=1.5pt] (v0) -- (d4);

		\draw[->,line width=1.5pt] (v1) -- (d7);
		\draw[->,line width=1.5pt] (v1) --(d8);
		
		\draw[->,line width=1.5pt] (v2) -- (d5);
		\draw[->,line width=1.5pt] (v2) -- (d6);
		
		\draw[-,line width=1.5pt] (d1) to [out=315,in=225] (d8);
		\draw[-,line width=1.5pt] (d2) to [out=45,in=315,looseness=2.2] (d5);
		\draw[-,line width=1.5pt] (d3) to [out=135,in=135] (d7);
		\draw[-,line width=1.5pt] (d4) to [out=225,in=45,looseness=2] (d6);
		\end{tikzpicture}}
		\caption{Seifert map of $\omega=12342143$.}\label{fig.1}
	\end{figure}

\begin{defn}[Vertex-medial (pre)map from an (un)oriented map]\label{d.med_or}
	Let $M=(D;\tau,\alpha)$ be a map.
	Then the \emph{vertex-medial premap of $M$}, $\text{Mv}(M)=(P,\sigma')=((D';\overline{\tau}',\alpha'),\sigma')$ is a $2/2$-premap constructed as follows.
		
		The vertex set of $P$ is the edge set of $M$: $V(P)= \{\{d,\sigma(d)\}\}_{d\in D}$.
		
		The set of darts $D'$ is the union of $D\times \{+\}$ and $D\times \{-\}$. (The elements $(d,+)\in D\times\{+\}$ and $(d,-)\in D\times\{-\}$ are denoted by $d_{+}$ and $d_{-}$ respectively.)
		
		
		The orientation $\sigma'$ is given by $\sigma'(d_+)=+1$ and $\sigma'(d_-)=-1$
		
		 
		 $\alpha'$ is given by
		$\alpha'(d_{-})=\tau(d)_{+}$ and 
		$\alpha'(d_{+})=\tau^{-1}(d)_{-}$.
%
		
		
%
		Each $\tau'\in \overline{\tau}'$ is formed by the $|D|/2$ permutations cycles $\{c_{\{d,\alpha(d)\}}\}_{\{d,\alpha(d)\}\in V(P)}$ with
		\[c_{\{d,\alpha(d)\}}\in\left\{(
		\alpha(d)_{+}\,\;d_{+}\,\;d_{-}\,\;\alpha(d)_{-}),\,\;(d_{+}\,\;
		\alpha(d)_{+}\,\;\alpha(d)_{-}\,\;d_{-})\right\}.\]


If the map $M$ is oriented by $\sigma$, then the map $\tau'\in \overline{\tau}'$ of the \emph{vertex-medial map} 
$\text{Mv}(M,\sigma)=((D';\tau',\alpha'),\sigma')$ is given, for $d\in D$, by
			\[
			\begin{cases}
			\text{ If }\sigma(d)=-1, \sigma(\alpha(d))=+1 \text{ then }&
						\begin{cases}
\tau'(d_{+})=\alpha(d)_{+}, \; \tau'(\alpha(d)_{+})=\alpha(d)_{-},\\ \tau'(\alpha(d)_{-})=d_{-},\;
\tau'(d_{-})=d_{+}
\end{cases}\\

			\text{ If }\sigma(d)=+1,\sigma(\alpha(d))=-1 \text{ then }&
						\begin{cases}
			\tau'(d_{+})=d_{-},\;
			\tau'(d_{-})=\alpha(d)_{-}\\
			\tau'(\alpha(d)_{-})=\alpha(d)_{+},\;
			\tau'(\alpha(d)_{+})=d_{+}
			\end{cases}\\
						\end{cases}.
			\]
			
\end{defn}

\begin{observation}\label{obs.prop_vm}
	In the vertex-medial (pre)map of $M$, each edge in $M$ is a vertex in $\text{Mv}(M)$. Each pair of consecutive darts at a vertex constitutes an edge. For each dart $d$, the previous edge (consecutive pair of darts $(\text{``previous to $d$''}, d)\;$) and next edge (the consecutive pair of darts $(\text{``next to $d$''},d)\;$) at the vertex $\nu(d)$ in $M$ are consecutive in the vertex $\{d,\alpha(d)\}$ in $\text{Mv}(M)$, thus in particular belong to the same face. If $(d_1,\ldots,d_k)$ is a permutation cycle corresponding to a vertex in $M$, then the edges $(d_1,d_2),(d_2,d_3),\ldots,(d_k,d_1)$ corresponding to consecutive pairs of darts in $M$ constitute a closed circuit in $\text{Mv}(M)$.
\end{observation}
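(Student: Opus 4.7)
The plan is to verify each of the four assertions by unwinding Definition~\ref{d.med_or}; once the correspondence between the pieces of $M$ and $\text{Mv}(M)$ is made explicit, every claim reduces to a short symbolic computation with $\tau,\alpha$ and their primed counterparts $\tau',\alpha'$. No deep idea is required; the main obstacle is bookkeeping, especially handling both admissible choices of the cycle $c_{\{d,\alpha(d)\}}$ uniformly.

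The first claim, that each edge of $M$ is a vertex of $\text{Mv}(M)$, is immediate from $V(P)=\{\{d,\alpha(d)\}\}_{d\in D}$. For the second, given consecutive darts $d,\tau(d)$ at a vertex $\nu(d)$ of $M$, the rule $\alpha'(d_-)=\tau(d)_+$ supplies the edge $\{d_-,\tau(d)_+\}$ of $\text{Mv}(M)$, whose endpoints $\{d,\alpha(d)\}$ and $\{\tau(d),\alpha(\tau(d))\}$ are exactly the two edges of $M$ meeting consecutively at $\nu(d)$.

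For the third claim, applying the second to the pairs $(\tau^{-1}(d),d)$ and $(d,\tau(d))$, the previous edge of $M$ corresponds to the $\text{Mv}$-edge $\{\tau^{-1}(d)_-,d_+\}$ and the next edge to $\{d_-,\tau(d)_+\}$; both attach to the vertex $\{d,\alpha(d)\}$ through the darts $d_+$ and $d_-$ respectively. Both admissible cycles $(\alpha(d)_+\;d_+\;d_-\;\alpha(d)_-)$ and $(d_+\;\alpha(d)_+\;\alpha(d)_-\;d_-)$ place $d_+$ and $d_-$ cyclically consecutively, so the two $\text{Mv}$-edges are consecutive at $\{d,\alpha(d)\}$. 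To obtain a common face I compute, for the first cycle, $\alpha'(\tau^{-1}(d)_-)=\tau(\tau^{-1}(d))_+=d_+$ and hence $\tau'\alpha'(\tau^{-1}(d)_-)=\tau'(d_+)=d_-$, exhibiting $\tau^{-1}(d)_-$ and $d_-$ as consecutive entries of a single face cycle of $\tau'\alpha'$; since these darts lie on the two edges in question, both edges sit on the boundary of that face. The second cycle is symmetric, using $\tau'\alpha'(\tau(d)_+)=\tau'(d_-)=d_+$.

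Finally, for the closed-circuit claim, iterating the second claim with the consecutive pairs $(d_i,d_{i+1})=(d_i,\tau(d_i))$ in the permutation cycle $(d_1,\ldots,d_k)$ (indices modulo $k$) produces edges of $\text{Mv}(M)$ joining $\{d_i,\alpha(d_i)\}$ to $\{d_{i+1},\alpha(d_{i+1})\}$, forming a closed walk of length $k$. The only step with real content is the face verification in step three; the rest is direct from the definitions.
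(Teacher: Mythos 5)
Your verification is correct and is exactly the direct unwinding of Definition~\ref{d.med_or} that the paper leaves implicit (the statement is given as an observation without a written proof); the identification of the previous/next edges with $\{\tau^{-1}(d)_-,d_+\}$ and $\{d_-,\tau(d)_+\}$, the check that $d_+,d_-$ are adjacent in both admissible cycles, and the face computation via $\tau'\alpha'$ are all accurate. Nothing is missing.
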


\begin{remark}
	The medial graph construction of the map $M$ involves creating a $4$-regular map where the edges of $M$ become the vertices, where $e_1$ is adjacent to $e_2$ if $e_1$ is the previous (or next) edge to $e_2$ along a face rotation (e.g. \cite{archdeacon1992medial}), and the cycle rotation is determined by the rotation of the neighbouring edges along the (positive) orientation of the surface.
In particular, 
	the contiguity of two darts in the vertex rotations are determined by the cyclic rotations of the faces of $M$. In contrast, the contiguity in Definition~\ref{d.med_or} is determined by the vertex rotations of $M$.
	For instance, the medial map of a loop is a vertex with two loops on the plane. However, the vertex-medial map of a loop is a vertex with two loops embedded in the torus. See Figure~\ref{fig.6} for an example.
%
%
%
\end{remark}

	\begin{figure}[ht]
		\centering
		\fbox{\begin{tikzpicture}[scale=0.7]
		\tikzstyle{vertex}=[circle,fill=black!100,minimum size=7pt,inner sep=0pt]
		

		\node[vertex,label=0:{\footnotesize{$\vdots$}}] (n0) at (-2,0) {};
		\node[vertex,label=180:{\footnotesize{$\vdots$}}] (n1) at (-5,0) {};
		\draw[-,line width=1pt] (n0) -- (n1);

		
		
		\coordinate[label=0:{\footnotesize{$d_1$}}] (d1) at (-6,1);
		\coordinate[label=110:{\footnotesize{$d_2$}}] (d2) at (-4,0);
		\coordinate[label=0:{\footnotesize{$d_3$}}] (d3) at (-6,-1);
		\coordinate[label=70:{\footnotesize{$d_4$}}] (d4) at (-3,0);
		
		\coordinate[label=200:{\footnotesize{$d_5$}}] (d5) at (-1,-1);
		\coordinate[label=120:{\footnotesize{$d_6$}}] (d6) at (-1,1);
		
		\filldraw  ([xshift=0pt,yshift=-4pt]d2) rectangle ++(2pt,8pt);
		\filldraw  ([xshift=0pt,yshift=-4pt]d4) rectangle ++(2pt,8pt);
		
		\draw[-,line width=1pt] (n0) -- (d6);
		\draw[-,line width=1pt] (n0) -- (d5);
		\draw[-,line width=1pt] (n1) -- (d1);
		\draw[-,line width=1pt] (n1) -- (d3);		

		\node[vertex,label=0:{\footnotesize{$(d_{4+},d_{2+},d_{2-},d_{4-})$}}] (n2) at (3,2) {};
		
		\node[vertex,label=0:{\footnotesize{$(d_{2+},d_{4+},d_{4-},d_{2-})$}}] (n3) at (3,-2) {};

		\coordinate[label=180:{\footnotesize{$\{d_{2-},d_{1+}\}$}}] (d1') at (2,3);
		\coordinate[label=0:{\footnotesize{$\{d_{4+},d_{6-}\}$}}] (d3') at (4,1);
		\coordinate[label=0:{\footnotesize{$\{d_{2+},d_{3-}\}$}}] (d6') at (4,3);
		\coordinate[label=270:{\footnotesize{$\{d_{4-},d_{5+}\}$}}] (d5') at (2,1);
		
		\draw[->] (n2)-- (d1');
		\draw[<-] (n2)-- (d3');
		\draw[<-] (n2)-- (d6');
		\draw[->] (n2)-- (d5');

		\coordinate[label=90:{\footnotesize{$\{d_{2-},d_{1+}\}$}}] (d1'') at (2,-1);
		\coordinate[label=0:{\footnotesize{$\{d_{4+},d_{6-}\}$}}] (d3'') at (4,-3);
		\coordinate[label=180:{\footnotesize{$\{d_{2+},d_{3-}\}$}}] (d6'') at (2,-3);
		\coordinate[label=0:{\footnotesize{$\{d_{4-},d_{5+}\}$}}] (d5'') at (4,-1);
		
		\draw[->] (n3)-- (d1'');
		\draw[<-] (n3)-- (d3'');
		\draw[<-] (n3)-- (d6'');
		\draw[->] (n3)-- (d5'');
		
		\draw[->,line width=2pt] (-0.5,0.5) -- (1.5,1.5);
		\coordinate[label=90:{\tiny{$\sigma(d_2)=+1,\sigma(d_4)=-1$}}]
		(lab_line_1) at (10,2.5);
		\draw[->,line width=2pt] (-0.5,-0.5) -- (1.5,-1.5);
				\coordinate[label=90:{\tiny{$\sigma(d_2)=-1,\sigma(d_4)=+1$}}]
		(lab_line_1) at (10,-3);
		
		\end{tikzpicture}}
		\caption{Vertex-medial premap}\label{fig.6}
	\end{figure}
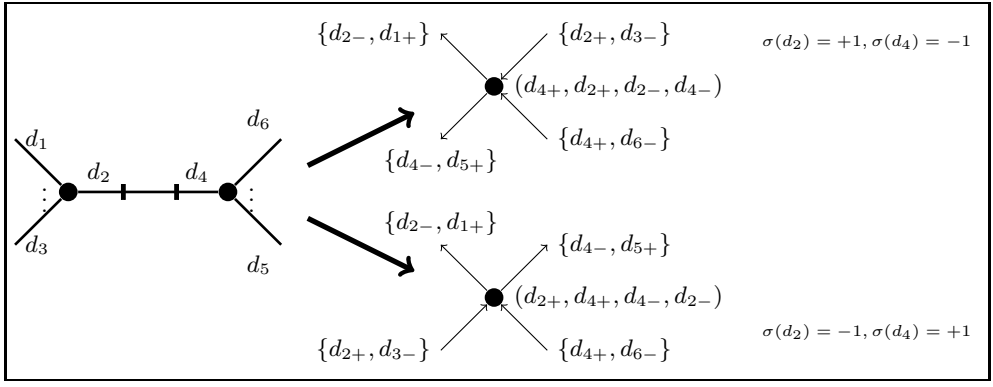

%

\begin{proposition}[Invertible operations]\label{prop.inv_op}
	$\text{Sf}(\cdot)$ and $\text{Mv}(\cdot)$ are invertible operations.
	\begin{enumerate}
	\item\label{p.inv.1}
	If $M$ is a map, then $\text{Sf}(\text{Mv}(M))=M$. 
	
	If $(M,\sigma)$ is an oriented map, then $\text{Sf}(\text{Mv}(M,\sigma))=(M,\sigma)$.
	\item\label{p.inv.2} If $(P,\sigma)$ is a $2/2$-premap, then $\text{Mv}(\text{Sf}(P,\sigma))=(P,\sigma)$. 
	
	If $(M,\sigma)$ is a $2/2$-map, then
	$\text{Mv}(\text{Sf}(M,\sigma))=(M,\sigma)$.
	
	\item \label{p.bij-orient-map}
	There is a bijection between the orientations of the map $M$ and the maps in the $2/2$-premap $\text{Mv}(M)$.
	\end{enumerate}
\end{proposition}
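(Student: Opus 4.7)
The plan is to verify each part by tracing through the definitions using natural identifications on dart sets, with most of the work concentrated in part~\ref{p.inv.2}.

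For part~\ref{p.inv.1}, start with a map $M=(D;\tau,\alpha)$. The $2/2$-premap $\text{Mv}(M)$ has dart set $D\times\{+,-\}$ with positively-oriented darts $\{d_+\}_{d\in D}$, which are precisely the darts of $\text{Sf}(\text{Mv}(M))$ and are naturally identified with $D$ via $d_+\mapsto d$. First I would check that the two positive darts at the vertex $\{d,\alpha(d)\}$ of $\text{Mv}(M)$ are $d_+$ and $\alpha(d)_+$, so the involution of $\text{Sf}(\text{Mv}(M))$ pairs them, recovering $\alpha$. Next I would verify that $\rho(d_+)=\tau(d)_+$ uniformly, by a short case analysis over the two allowed rotations at $\{d,\alpha(d)\}$ in $\overline{\tau}'$ and the corresponding four branches in the definition of $\rho$; this identifies the Seifert cycles of $\text{Mv}(M)$ with the $\tau$-cycles of $M$, recovering $\tau$. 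For the oriented version, a direct check of the two cases in Definition~\ref{d.med_or} shows $\sigma'(\tau'(d_+))=-\sigma(d)$, so $\sigma''(d_+)=\sigma(d)$.

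For part~\ref{p.inv.2}, given $(P,\sigma)=((D;\overline{\tau},\alpha),\sigma)$, form $M'=\text{Sf}(P,\sigma)$ with darts $D'=\sigma^{-1}(+1)$, and then $\text{Mv}(M')$ on dart set $D'\times\{+,-\}$, which has the same cardinality as $D$. The identification sends $d_+\mapsto d$, and $d_-$ to the unique negative dart of $P$ at $\nu_P(d)$ adjacent to $d$ in the rotation (the specific choice is dictated by the local sign pattern, matching the sign cases used to define $\overline{\tau}''$). Under this identification I would verify: (i) each vertex $\{d,\alpha'(d)\}$ of $\text{Mv}(M')$ corresponds to the vertex $\nu_P(d)$ of $P$, and the two rotations in $\overline{\tau}''$ at that vertex match the two rotations in $\overline{\tau}$; (ii) the involution $\alpha''(d_+)=\tau'^{-1}(d)_-=\rho^{-1}(d)_-$ coincides, via the branches in the definition of $\rho$, with the identified image of $\alpha(d)$; (iii) the orientation $\sigma''$ agrees with $\sigma$, immediate from the sign prescription in Definition~\ref{d.med_or}.

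Part~\ref{p.bij-orient-map} then falls out of parts~\ref{p.inv.1} and~\ref{p.inv.2}. The assignment $\sigma\mapsto\tau'(\sigma)$, with $\tau'(\sigma)$ the rotation selected by Definition~\ref{d.med_or}, lands in $\overline{\tau}'$, and its inverse is extracted by applying $\text{Sf}$ to the $2/2$-map $(\text{Mv}(M),\sigma'',\tau')$ and reading off the orientation via part~\ref{p.inv.1}. A counting sanity check confirms the bijection: orientations of $M$ correspond to one positive-dart choice per edge, giving $2^{|E(M)|}$, while elements of $\overline{\tau}'$ correspond to one binary rotation choice per vertex of $\text{Mv}(M)$, giving $2^{|V(\text{Mv}(M))|}=2^{|E(M)|}$.

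The main obstacle is the bookkeeping in part~\ref{p.inv.2}: correctly identifying the negative darts of $\text{Mv}(M')$ with those of $P$ and verifying that the involution, rotation, and orientation all match in every local sign configuration. The case analysis is routine but requires splitting on the signs $\sigma(d)$ and $\sigma(\tau(d))$ and carefully inverting the $\rho$-relation; parts~\ref{p.inv.1} and~\ref{p.bij-orient-map} are then essentially forced by the construction.
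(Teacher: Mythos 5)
Your proposal is correct and follows essentially the same route as the paper's proof: identify the dart sets via $d\mapsto d_+$ (and, in part~\ref{p.inv.2}, pair each $d_-$ with the adjacent negative dart of $P$ in the rotation), then verify $\alpha$, $\tau$ (via $\rho$), and $\sigma$ by the same case analysis on local sign patterns, with part~\ref{p.bij-orient-map} read off from the correspondence between edge orientations and the binary rotation choice at each vertex of $\text{Mv}(M)$. The only cosmetic differences are that the paper checks the involution in part~\ref{p.inv.2} on negative darts ($\alpha''(d_-)=\tau'(d)_+$) rather than positive ones, and your $2^{|E(M)|}$ counting check is an extra sanity verification not present in the paper.
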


\begin{proof}[Proof of Proposition~\ref{prop.inv_op}]
	Let us show part \ref{p.inv.1}. Let  $M=(D;\tau,\alpha)$ be a map. Let
		$\text{Mv}(M)=((D';\overline{\tau}',\alpha'),\sigma')$ and
		$\text{Sf}(\text{Mv}(M))=(D'';\tau'',\alpha'')$ denote these two images. We shall see that that $D''=D$, $\tau''=\tau$, and $\alpha''=\alpha$.
		
		We have $D'=D\times \{+\}\cup D\times \{-\}$ while the set of darts $D''$ is the set of positively oriented darts from $D'$, hence $D''=D$.
		
%
%

		Let $d\in D$ be a positively oriented dart. Then
		$\alpha'(d_{-})=\tau(d)_{+}$ and $\alpha'(d_{+})=\tau^{-1}(d)_{-}$, any $\tau'\in \overline{\tau}'$ contains either
		$(\alpha(d)_+\,\;d_+\,\;d_{-}\,\;\alpha(d)_-)$  or  $(d_+\,\;\alpha(d)_+\,\:\alpha(d)_{-}\,\;d_-)$
		as a permutation cycle. Moreover, $\sigma'(d_{+})=+1$ and $\sigma'(d_{-})=-1$. Since, $\alpha(d)_+$ is the other positively signed dart in the same permutation cycle as $d_+$, then  $\alpha''(d_+)=\alpha(d)_+$, which implies $\alpha=\alpha''$. 
		Also, 
		\[\tau''(d_{+})=\rho'(d_{+})=
		\begin{cases}
\alpha'(\tau'(d_{+}))=\alpha'(d_{-})=\tau(d)_{+} &\text{ if }\;
\sigma'(d_{+})=+1 \text{ and } \sigma'(\tau'(d_{+}))=-1\\

\alpha'(\tau'^{-1}(d_{+}))=\alpha'(d_{-})=\tau(d)_{+} &\text{ if }\;
\sigma'(d_{+})=+1 \text{ and } \sigma'(\tau'(d_{+}))=+1\\
\end{cases}
\]		which shows that $\tau=\tau''$.

To show the second part of \ref{p.inv.1}, let $\sigma$ be an orientation of the map $M$. Then, if $\sigma(d)=+1$, $\tau'$ contains the cycle
\[
(\alpha(d)_{+}\,\;d_+\,\;d_{-}\,\;\alpha(d)_{-})
\]
which implies that $\sigma'(\tau'(d_{+}))=\sigma'(d_{-})=-1$, hence giving back $\sigma''(d)=+1$. A similar argument shows the case for  $\sigma(d)=-1$.

Now let us show part \ref{p.inv.2}. Let $(P,\sigma)=((D;\overline{\tau},\alpha),\sigma)$ be a $2/2$-premap, let $\text{Sf}(P,\sigma)=(D';\tau',\alpha')$, and let  $((D'';\overline{\tau}'',\alpha''),\sigma'')=\text{Mv}(\text{Sf}(P,\sigma))$ be another $2/2$-premap.

	$D'$ are the positively oriented darts of $D$ and $D''=D'\times\{+\}\cup D'\times \{-\}$, hence clearly $D''=D$.

	Given a positively oriented $d\in D$, let either $(d\,\; d' \,\; d_1\,\; d_2)$ or $(d'\,\;d\,\;d_2\,\;d_1)$ be the permutation cycle of any $\tau\in \overline{\tau}$ associated to $\nu(d)$, and where $d'$ denotes the other positive dart in the mentioned permutation cycle. Then $\alpha'(d)=d'$. Furthermore, by definition of $\tau'$, \[
	\tau'(d)=\rho(d)=
	\begin{cases}
	\alpha(\tau(d))=\alpha(d_2) &\text{ if } \sigma(d)=+1 \text{ and } \sigma(\tau(d))=-1\\
	\alpha(\tau ^{-1}(d))=\alpha(d_2) &\text{ if } \sigma(d)=+1 \text{ and } \sigma(\tau(d))=+1
	\end{cases}
	\]
	where $\tau$ denotes any element in $\overline{\tau}$ (recall that $\rho$ is well defined on $\overline{\tau}$).

	Since $\alpha'(d)=d'$, any $\tau''\in \overline{\tau}''$ is such that the cycle containing $d_{+}$ and $d_{-}$ is either
	$
	(d'_{+}\,\;d_+\,\;d_{-}\,\;d'_{-})$ or $(d_{+}\,\;d'_+\,\;d'_{-}\,\;d_{-})$.
	
	Observe that 
	\[
	\alpha''(d_{-})=\tau'(d)_{+}=	\begin{cases}
	\alpha(\tau(d))_{+}=\alpha(d_2)_{+} &\text{ if } \sigma(d)=+1 \text{ and } \sigma(\tau(d))=-1\\
	\alpha(\tau ^{-1}(d))_{+}=\alpha(d_2)_{+} &\text{ if } \sigma(d)=+1 \text{ and } \sigma(\tau(d))=+1
	\end{cases}.
	\]
	Therefore, $d_{-}\in D''$ has the same behavior as $d_2\in D$. In particular, if in the bijection $D''\leftrightarrow D$ we identify $d_2$ and $d_{-}$ then $\alpha''=\alpha$ and $\overline{\tau}''=\overline{\tau}$ as the permutations become the same (both behave in the same way).

	
	To show the second part of part \ref{p.inv.2}, let $\tau$ be a given map from  $\overline{\tau}$ such that $(d\,\;d'\,\; d_1\,\;d_2)$ is the cycle corresponding to the dart $d$ with $\sigma(d)=\sigma(d')=+1$ as before. Then
	the orientation on the Seifert map, $\sigma'$ is given by
	\[
	\sigma'(d)=-1
	\]
	as $\sigma(\tau(d))=\sigma(d')=-1$, which implies that $\tau''$ contains the permutation cycle
	\[
	(d_{+}\,\;\alpha'(d)_{+}\,\;\alpha'(d)_{-}\,\;d_{-})
	\]
	which is precisely 
	$(d_{+}\,\;d'_{+}\,\;d'_{-}\,\;d_{-})$ and coincides with $(d\,\;d'\,\;d_1\,\;d_2)$ in terms of behavior, as it has been previously seen.
%
%
%
%
%
%
%
%
%
%
%
%
%
%
%
%
%

%
%

Part~\ref{p.bij-orient-map} follows from the previous arguments by observing that an orientation on the edges in a map selects one of the two possible cyclic orientation of the darts around the vertex in its vertex-medial map, and vice-versa.
\end{proof}

\section{Characterizing Gauss codes among double occurrence words}\label{sec.char}

In this section we give an algorithmic characterization of Gauss paragraphs.

\subsection{Orientation and labeling algorithm}\label{sec.give_algo}

The algorithm described in this section, denoted by as $\text{Alg}(\cdot)$, has, as input, an unoriented map, and as output an oriented version of it. If the input of the algorithm is the Seifert map of a Gauss paragraph, then the vertex-medial map of the output gives a plane embedding of the paragraph.
The algorithm also checks two necessary conditions for the input to be the Seifert map of a Gauss paragraph (these conditions are not sufficient as Observation~\ref{obs.counter} shows).
The properties claimed for the algorithm are given in the statements Proposition~\ref{p.alg_correct} and Theorem~\ref{t.characterization}.

Before proceeding further, let us briefly comment some aspects of the algorithm. It divides the vertices into two categories, those with label $1$ and those with label $0$. If the map is the Seifert map of a doubly occurrence paragraph, the vertices represent Seifert cycles on the plane. With this interpretation, $1$ marks the cycle as counterclockwise (positive) oriented, while $0$ marks it as clockwise (negative) orientated.
The orientation of an edge of the map aims to asses the relative containment  relations in the plane between the two Seifert cycles meeting at an intersection point (see Proposition~\ref{prop.clar}).

\paragraph{Orientation algorithm $\text{Alg}(M)$.}

\begin{enumerate}
	\item Let $M$ be a map. 
	
	If $M$ is not connected, apply the algorithm to each connected component separately. 
	
	Give an absolute order to the vertices of $M$ and to the darts of $M$.		
	
	If $M$ is not bipartite, then $M$ is not the Seifert map of a Gauss paragraph.
	
	\item Select $v_0$, a non-cut vertex of $M$ ($v_0$ may contain loops).
	
	Root the map at $v_0$ and label $v_0$ with $1$.
	
	\item Consider the tree of $2$-connected blocks of $M$ \cite[Proposition~3.1.2]{diestel2005graph}.  The articulation points of the tree are the cut vertices of $M$.
	
	
	 Root the block of $v_0$ at $v_0$.

	The root of a $2$-connected block $B$ is the vertex in $B$ that disconnects the rest of the $2$-connected component from $v_0$, the root of the map $M$. 
	
	Each connected union of $2$-connected blocks is rooted, and its root is the root of the $2$-connected block closest to $v_0$.

	\item \label{step.def_sv}  Given $v$ a vertex in $M$, let $B_0',B_1',\ldots, B_k'$ be the connected components in $M\setminus \{v\}$, and $L_1,\ldots, L_r$ be the loops attached to $v$. 
	For each $i\in[0,k]$, let $B_i$ be the graph induced by $V(B_i')\cup \{v\}$. $B_0$ contains the vertex $v_0$.
	
 Let $d_1,\ldots,d_t$ be the cyclically ordered darts of $M$ attached to $v$. Let $S_v$ be the graph with vertex set $\{d_1,\;\ldots,d_t,\; B_0,B_1,\;\ldots,B_k,\;L_1,\;\ldots,L_r\}$. Join $B_i$ with the vertices $d_{i_1},\ldots,d_{i_s}$ if $v$ is connected to (the rest of) $B_i$ using the edges corresponding to $d_{i_1},\ldots,d_{i_s}$ in $M$. Join $d_{i}$ with $d_{i+1}$, indices modulo $t$ ($d_1,\ldots,d_t$ form a cycle). Join $L_i$ with the two darts $d_j,\alpha(d_j)$ in $v$ forming $L_i$.



	

 For every cut vertex $v$ or a vertex containing loops, (so $k\geq 1$ or $r\geq 1$) or $v_0$, we decide whether $B\in\{B_0,\linebreak[0] B_1, \ldots, B_k,\linebreak[0] L_1,\;\ldots,\;L_r\}$ is to the \emph{inside} of $v$ or to the \emph{outside} of $v$ as follows.
	\begin{enumerate}

		\item\label{step.planar} 
		
		If $S_v$ is planar, let $M_v$ be a plane embedding of $S_v$. The cycle $(d_1,\ldots,d_t)$ determines two faces in $M_v$, the \emph{outer} and the \emph{inner} face. A vertex is said to be to the \emph{inside} of $v$ if it is located in the inner face, and to the \emph{outside} of $v$ if it is located in the outer face. 
		
		If $v\neq v_0$, $B_0$ is in the outer face of $M_v$.
		
		If $v=v_0$, $B_0$ is in the inner face of $M_{v_0}$.

		
		\item\label{step.non-planar} If $S_v$ is not planar, then $M$ is not the Seifert map of a Gauss paragraph.
		
		Let $D_v$ be a plane drawing of $S_v$ with minimal number of crossings for $S_v$ conditioned to the property that the cycle $d_1,\ldots,d_t$ does not contain any crossing.

		
		Set the side containing $B_0$ to be the outer face, and the other to be the inside face of $v$ (unless $v=v_0$, in which case $B_0$ is in the inner face).




%
%
		
	
	\end{enumerate}
	\item \label{step.bip1}Given a $2$-connected block $B$ with root $v$,
	order the vertices of $B$ according to their distance to $v$. Let $l(v)$ be the label of $v$.
	\begin{itemize}
		\item If $B$ is to the inside of $v$, then label the vertices at distance $i$ from $v$ by $l(v)+i+1 \mod 2$ (so the neighbors of $v$ get the same label as $v$).
		\item If $B$ is to the outside of $v$, label the vertices at distance $i$ from $v$ by $l(v)+i \mod 2$.
		\item The parent of the vertex $u$ in $B$ is the minimal neighbor (in the total order given to the vertices) of $u$ strictly closer to $v_0$ than $u$.
	\end{itemize}

	\item \label{step.bip2} Orient the edges from the vertex labeled $0$ to the vertex labeled $1$ if the vertices are labeled differently. (If $M$ is bipartite, this orientation is independent on the chosen parent.)
	
	If the parent and the child have the same label, then the orientation of the edge is given by the child's label: from the child labeled $0$ to the parent labeled $0$ and from the parent labeled $1$ to the child labeled $1$.
	
	Orient the loops from the smaller dart to the larger dart, according to the total order on the darts.
	\item Return the map with the orientation on the edges.
\end{enumerate}

\subsection{Properties of the algorithm}\label{sec.alg_cor}

\begin{proposition}\label{p.alg_correct}
	Let $M$ be a connected map. Then
	\begin{enumerate}
		\item\label{prop.part.1} $\text{Alg}(M)$ gives an orientation to the edges of $M$.
		\item\label{prop.part.4} For every cut vertex $v$, Step~\ref{step.planar} chooses an embedding of $S_v$.
		Let $\{M_1,\ldots,M_q\}$ be all the possible outputs for $\text{Alg}(M)$.
		Assume that $\text{Mv}(M_1)$ is a plane map. Then 
		\begin{enumerate}
			\item \label{prop.part.4a} $\text{Mv}(M_i)$ is plane for all $i\in[q]$, 
			\item \label{prop.part.4b} the algorithm runs in linear time in $|V(M)|$. (We are assuming that, from a character in the paragraph, we may access both copies of the same character, and their combined neighbors of size $4$, in constant time.)
		\end{enumerate}
		\newcounter{enumtemp}
		\setcounter{enumtemp}{\theenumi}
	\end{enumerate}
%
Let $\omega$ be a Gauss paragraph. Let $M=\text{Sf}(\omega)$. Then
\begin{enumerate}	
	\setcounter{enumi}{\theenumtemp}
		\item \label{prop.part.2} 
		$M$ is bipartite.
		\item\label{prop.part.3} 
		For each cut vertex $v$ of $M$, $S_v$ (as defined in Step~\ref{step.def_sv}) is a planar graph.

	\end{enumerate} 
\end{proposition}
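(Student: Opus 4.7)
The proof naturally splits according to the four numbered parts. For Part~\ref{prop.part.1}, the argument is routine: the decomposition into $2$-connected blocks together with the rooted BFS within each block visits every vertex of $M$ exactly once and assigns it a label in $\{0,1\}$; the orientation rule of Step~\ref{step.bip2} then distinguishes three cases (edges between differently labeled vertices, in-block edges between equally labeled vertices, and loops) that together cover every edge. For Part~\ref{prop.part.4b}, one checks that the block-cut tree construction, the BFS inside each block, and the local planarity test for $S_v$ can each be implemented in time proportional to $\deg(v)$ at vertex $v$, giving total running time linear in $|V(M)|+|E(M)|$, which is $O(n)$ for the Seifert map of a paragraph on $n$ characters. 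For Part~\ref{prop.part.4a}, the nondeterminism in $\text{Alg}$ comes from choices of root $v_0$, of planar embedding of $S_v$ in Step~\ref{step.planar}, of drawing $D_v$ in Step~\ref{step.non-planar}, and of parent in Step~\ref{step.bip1}. I would argue that each such choice alters the resulting orientation only by flipping the orientations along whole sub-families of blocks, and that such flips correspond, via the bijection in Part~\ref{p.bij-orient-map} of Proposition~\ref{prop.inv_op}, to flipping the cyclic ordering at a corresponding set of vertices of $\text{Mv}(M_i)$; such local flips cannot change whether the induced closed curves embed in the plane.

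For Part~\ref{prop.part.2}, fix a plane embedding $\epsilon$ realizing $\omega$. Each Seifert cycle of $\epsilon$, being obtained from the ``straight-ahead'' resolution at every crossing, traces a simple closed curve in the plane and hence has a well-defined orientation, clockwise or counterclockwise. I would label each vertex of $\text{Sf}(\omega)$ by this orientation. Bipartiteness then reduces to showing that every edge of $\text{Sf}(\omega)$ joins Seifert cycles of opposite orientation: equivalently, at every crossing of $\epsilon$, the two incident Seifert cycles have opposite planar orientations. This is a purely local statement at a $4$-valent crossing and can be verified by case analysis on the sign pattern of the darts and the corresponding branch of the definition of $\rho$, showing that the two resolutions locally turn in opposite rotational senses.

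For Part~\ref{prop.part.3}, let $v$ be a cut vertex of $M=\text{Sf}(\omega)$ and let $\epsilon$ be a plane embedding of $\omega$. The vertex $v$ corresponds to a Seifert cycle $C$ traced by a simple closed curve in the plane, which partitions the plane into two open regions. Because $v$ is a cut vertex of $M$, removing the crossings on $C$ separates the parts of $\epsilon$ associated with each $B_i$ and each $L_j$ from the rest, so each such block lies entirely in one of the two regions determined by $C$. Placing the darts $d_1,\ldots,d_t$ along $C$ in the cyclic order at $v$ and placing each $B_i$ and $L_j$ inside or outside $C$ accordingly yields a plane realization of the abstract graph $S_v$, proving its planarity.

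The main obstacle I expect is Part~\ref{prop.part.2}: making the local ``opposite rotational sense'' argument fully rigorous requires a careful unpacking of the four cases in the definition of $\rho$ for $2/2$-premaps, together with the fact that a Seifert cycle may be non-simple as a walk in $M$ (it may repeat vertices, and it may traverse a loop at $v$) while still tracing a simple closed curve in the plane after resolution, so that the notion of clockwise versus counterclockwise is well defined. Part~\ref{prop.part.4a} also requires non-trivial bookkeeping to control exactly how two outputs of $\text{Alg}$ differ and to match those differences to flips preserving planarity in $\text{Mv}(M_i)$.
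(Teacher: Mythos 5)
Your Part~\ref{prop.part.2} rests on a false local claim. You propose to $2$-colour the vertices of $\text{Sf}(\omega)$ by the clockwise/counterclockwise sense of the corresponding Seifert cycles and then to show that every edge joins cycles of opposite sense. But two adjacent Seifert cycles have opposite rotational sense only when each lies in the exterior of the other; when one is nested inside the other they have the \emph{same} sense. This is exactly the content of the chart in Proposition~\ref{prop.clar} (the rows ``$c_1$ inside $c_2$: counter.\ $\Rightarrow$ counter.''), and it already occurs for the one-crossing curl, whose two Seifert cycles are nested, adjacent and co-oriented; it is also the reason Step~\ref{step.bip1} of the algorithm gives the neighbours of $v$ the \emph{same} label as $v$ when the block lies inside $v$. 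So your colouring is not a proper $2$-colouring, and no amount of local case analysis on $\rho$ will make it one. The paper instead combines the chart with a structural fact: within a $2$-connected block of $\text{Sf}(\omega)$ all Seifert cycles are pairwise exterior to one another except possibly the root, so the orientation becomes a proper $2$-colouring of each block after flipping the root, and bipartiteness of every block gives bipartiteness of $M$. Some such global input is needed.

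Part~\ref{prop.part.4a} also has a gap at its crux. You assert that different outputs of $\text{Alg}$ differ by reversing orientations on whole sub-families of blocks, and that the corresponding flips of vertex rotations in $\text{Mv}(M_i)$ ``cannot change whether the induced closed curves embed in the plane.'' That last assertion is the entire difficulty and is false for arbitrary flips: reversing the rotation at an arbitrary set of vertices of a plane $4$-regular map can raise the genus, and flipping the inside/outside status of components $B_i$ whose attachment darts interleave with those of the unflipped components destroys planarity --- which is precisely why Step~\ref{step.planar} restricts to plane embeddings of $S_v$. The paper closes this in two steps: Claim~\ref{cl.4} shows that any two plane embeddings of $S_v$ are connected by switches of subsets of components whose attachment darts form a \emph{consecutive} arc of $(d_1,\ldots,d_t)$, and a subsequent face analysis shows that one such switch reverses the rotations on a submap $C_1$ lying inside a single face of the complementary submap $C_0$, preserving the counts of vertices, edges and faces, so planarity follows from Euler's formula. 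Your remaining parts are essentially the paper's: Part~\ref{prop.part.1} and Part~\ref{prop.part.4b} are routine, and your Part~\ref{prop.part.3} argument (each component $B_i$ lies on one side of the simple closed curve traced by the Seifert cycle $v$, then contract) is an informal version of the minor construction in Claim~\ref{cl.2}, though you should still justify that the attachment points occur along that curve in the cyclic order $(d_1,\ldots,d_t)$.
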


\begin{proof}[Proof of Proposition~\ref{p.alg_correct}]
	
	\emph{Part \ref{prop.part.1}.} The algorithm performs 
	a test of bipartiteness on $M$ and planarity for each $S_v$. However, the algorithm does not stop when the test fail and it always gives an orientation of the edges. The orientation is given after the algorithm internally determines a root, a labeling on the vertices and a parent for each vertex besides the root.



%

	\emph{Part~\ref{prop.part.3}.} 
	Let us show the following claim.
	
	\begin{claim}\label{cl.2}
		Let $\sigma$ be an orientation of the edges of $M$,  $v$ a vertex of $M$, and $S_v$ be the graph defined in Step~\ref{step.def_sv}.
		If $\text{Mv}(M,\sigma)$ is a map on an orientable surface of genus $k$, then the graph $S_v$ has orientable genus $\leq k$.
		
		Moreover, if $\text{Mv}(M,\sigma)$ is plane, then $\text{Mv}(M,\sigma)$ defines a plane embedding for each $S_v$.
	\end{claim}

	\begin{proof}[Proof of Claim~\ref{cl.2}]
Let $G_0$ be the $4$-regular graph underlying the oriented map $\text{Mv}(M,\sigma)$.
Let $(d_1,\ldots,d_t)$ be the cyclic permutation of darts associated to $v$ in $M$.
 In $G_0$, this cyclic permutation associated to $v$ induces a closed circuit formed by the ordered set of vertices $(\{d_1,\alpha(d_1)\},\{d_2,\alpha(d_2)\},\ldots,\{d_t,\alpha(d_t)\})$ (see Observation~\ref{obs.prop_vm}). If $v$ is loopless, the closed circuit is a cycle on $t$ vertices.

Let $G_1$ be the graph obtained from $G_0$ by $1$-subdividing each edge in the closed circuit induced by $v$ in $G_0$: for each $i\in [1,t]$, indices modulo $t$, the edge from $\{d_i,\alpha(d_i)\}$ to $\{d_{i+1},\alpha(d_{i+1})\}$ is replaced by a path on the $3$ vertices $(\{d_i,\alpha(d_i)\},u_i,\{d_{i+1},\alpha(d_{i+1})\})$, where $u_i$ is a new vertex. From the embedding $\text{Mv}(M,\sigma)$ we can obtain 
$\text{Mv}(M,\sigma)'$, an embedding of $G_1$, by considering the vertex $u_i$ as a point in the edge $\{d_i,\alpha(d_i)\}\{d_{i+1},\alpha(d_{i+1})\}$.


		
Let $G_2$ be the graph derived from $G_1$ by adding an edge $e_i$ between $u_i$ and $u_{i+1}$, for each $i\in[1,t]$, indices modulo $t$.
In $\text{Mv}(M,\sigma)$, the edge $\{d_i,\alpha(d_i)\} \{d_{i+1},\alpha(d_{i+1})\}$, and the edge  $\{d_{i+1},\alpha(d_{i+1})\}\{d_{i+2},\alpha(d_{i+2})\}$ are contiguous in the cyclic orientation around the vertex $\{d_{i+1},\alpha(d_{i+1})\}$ (see Observation~\ref{obs.prop_vm}). Hence, these edges are consecutive in a face of $\text{Mv}(M,\sigma)$. This implies that each edge $e_i$ can be added to the map $\text{Mv}(M,\sigma)'$ to create a map $\text{Mv}(M,\sigma)''$ which embeds $G_2$ in the same surface as $\text{Mv}(M,\sigma)$. In particular, the genus of $G_2$ is less or equal than the genus of $\text{Mv}(M,\sigma)$.

%
%
%
		
		Let $G_3$ be obtained by deleting, for each $i\in[1,t]$, the edge from $u_i$ to $\{d_{i+1},\alpha(d_{i+1})\}$ in $G_2$.
		Let $G_4$ be obtained from $G_3$ by contracting all the edges not attached to the vertices $\{u_i\}_{i\in[1,t]}$.
		Then $S_v=G_4$, and $S_v$ can be embedded in the surface of $\text{Mv}(M,\sigma)$ or one with lower genus (as $G_4$ is a minor of $G_2$). Thus both parts of the claim follow.
	\end{proof}

By Proposition~\ref{prop.inv_op}, the plane embedding $\Omega$ of $\omega$ induces an orientation $\sigma$ to the edges of $\text{Sf}(\omega)$ for which $\text{Mv}(\text{Sf}(\omega),\sigma)$ is plane. Hence, Claim~\ref{cl.2} shows that each $S_v$ is planar.

	\emph{Part~\ref{prop.part.4a}.} Let us show the following lemma.
	
	\begin{lemma}\label{lemma.dif_planar}
		Let $\{M_1,\ldots,M_q\}$ be all the possible oriented maps output by $\text{Alg}(M)$, due to possible choices of Step~\ref{step.planar}.
		Assume that $\text{Mv}(M_1)$ is a plane map. Then $\text{Mv}(M_i)$ is plane for all $i\in[1,q]$.
	\end{lemma}
\begin{proof}[Proof of Lemma~\ref{lemma.dif_planar}]
		In $\text{Alg}(M)$, the different choices in Step~\ref{step.planar} only affect the different orientations of the edges of $M$.
		The fact that $\text{Mv}(M_1)$ is plane implies that 
		 the double occurrence paragraph $\omega$ induced by $\text{Mv}(M_1)$ is realizable (is a Gauss paragraph). Thus, $(M,\sigma)=\text{Sf}(\text{Mv}(M_1))$ is bipartite (see Part~\ref{prop.part.2}) and $M$ has no loops.
		Additionally, Part~\ref{prop.part.3} shows that each $S_v$ is planar.
		
%

If $v_1$ and $v_2$ are two cut vertices, then the choices for plane embeddings for $S_{v_1}$ and for $S_{v_2}$ are independent. 
Hence the claim follows if we show that, given a generic cut vertex $v$, then $M_2$, obtained from $M_1$ by a change in the plane embedding for $S_v$, is such that $\text{Mv}(M_2)$ is plane. Let $S_1$, resp. $S_2$, be the plane embedding of $S_v$ corresponding to $M_1$, resp. $M_2$.

Let $(d_1,\ldots,d_t)$ be the cyclic permutation associated to the vertex $v$ in $M$, and let $\{B_0,B_1,\ldots,B_k\}$ be the vertices in $S_v$ different from $d_1,\ldots,d_t$ (we follow the notation from the algorithm $\text{Alg}(\cdot)$). Since $M$ has no loops, then $S_v$ has no vertices $L_i$. For simplicity, the vertex $\{d_i,\alpha(d_i)\}$ is denoted by $d_i$.
The inside/outside status of the vertices $\{B_0,B_1,\ldots,B_k\}$ (with respect to the cycle induced by $d_1,\ldots,d_t$) completely determines the embedding of $S_v$.


Given $\mathcal{B}$ a subset of $\{B_0,B_1,\ldots,B_k\}$, let $N(\mathcal{B})$ denote the set of neighbors of $\mathcal{B}$ among $d_1,\ldots,d_t$.
Let $\{B_1,\ldots,B_l\}$ be the set of vertices that change its status from $S_1$ to $S_2$.

\begin{observation}\label{obs.aux.1}
	Let $B\in \{B_1,\ldots,B_l\}$ with $d_i,d_j\in N(B)$ and $i<j$. Let $B'\in \{B_0,B_{l+1},\ldots,B_k\}$ with $d_{i'},d_{j'}\in N(B')$.
	\begin{itemize}
		\item 
	Then either $i',j'\in (i,j)$ or $i',j'\in [1,i)\cup (j,t]$. Indeed, say $i'\in (i,j)$ and $j'\in [1,i)\cup (j,t]$, then the paths $(d_i,B,d_j)$ and $(d_i',B',d_j')$ should be on different sides with respect to the cycle $(d_1,\ldots,d_t)$. The path $(d_i,B,d_j)$ changes sides (in $S_1$ with respect to $S_2$) while $(d_i',B',d_j')$ remains in on the same side on both $S_1$ and $S_2$. Thus, we obtain a contradiction with the planarity of $S_1$ and $S_2$, hence the claim that either $i',j'\in (i,j)$ or $i',j'\in [1,i)\cup (j,t]$ follows. The case $j'\in (i,j)$ and $i'\in [1,i)\cup (j,t]$ follows similarly.
	
	
	\item Moreover, let $B''\in \{B_1,\ldots,B_l\}$ with
	$d_{i''},d_{j''}\in N(B'')$. Assume that $[i,j]\cap [i'',j'']\neq \emptyset$ (the intervals are modulo $t$, so if $i''<j''$ we either pick 
	$[i'',j'']$ or $[j'',t+i'']=_{\mod t}[j'',i'']$ so that the condition $[i,j]\cap [i'',j'']\neq \emptyset$ is satisfied).
	
	If $B'\in \{B_0,B_{l+1},\ldots,B_{k}\}$ is such that $N(B')\cap [i,j]\cap [i'',j'']\neq \emptyset$, then, by the previous argument, $N(B')\subset [i,j]\cap [i'',j'']$.
	
	\end{itemize}
\end{observation}

\begin{observation}\label{obs.aux.2}
If $\{B_{i_1},\ldots,B_{i_l},L_{j_1},\ldots,L_{j_{l'}}\}$ is a subset of the vertices in $S_v$ whose neighborhood, $N(\{B_{i_1},\ldots,B_{i_l},L_{j_1},\ldots,L_{j_{l'}}\})$, is a set of consecutive vertices from $d_1,\ldots,d_t$, then, in any plane embedding of $S_v$, we can switch (all at once) the inside/outside status of 
$\{B_{i_1},\ldots,B_{i_l},L_{j_1},\ldots,L_{j_{l'}}\}$ and obtain another plane embedding of $S_v$, independently on the inside/outside status of the other vertices.
\end{observation}


\begin{claim}\label{cl.4}
	Let $v$ a cut vertex of $M$, with darts $d_1,\ldots,d_t$, that induces the connected components $B_0,\ldots,B_k$ and with loops $L_1,\ldots,L_r$ (see Step~\ref{step.def_sv}.) Let $S_1$ and $S_2$ be two plane embeddings of $S_v$, where $B_0$ is in the exterior face. Then there is a sequence of plane embeddings of $S_v$, $s_1,\ldots,s_p$, with $S_1=s_1$, $s_p=S_2$ with the following property.
	Each $s_i$ is obtained from $s_{i+1}$ by switching the inside/outside status of a subset $\mathcal{B}_{i,i+1}$ of vertices from $B_0,B_1,\ldots,B_k,L_1,\ldots,L_r$ such that the neighborhood of $\mathcal{B}_{i,i+1}$ in $S_v$ is a consecutive (in the cyclic order) subset of $d_1,\ldots,d_t$.
%
%
%
%
%
%
%
%
\end{claim}

\begin{proof}[Proof of Claim~\ref{cl.4}] Since in $S_v$ the vertex corresponding to the loop $L_i=\{d,\alpha(d)\}$ behaves equivalently as if there was a $2$-connected component $B_j$ connected to $d$ and $\alpha(d)$, we may assume, without loss of generality, that $r=0$.
	
%

Let $\{B_1,\ldots,B_l\}$ be the switched vertices from $S_1$ to $S_2$.
 Let $\{d_1,\ldots,d_p\}$ denote the minimal set of consecutive darts containing $N(\{B_1,\ldots,B_l\})$.
Then $\{d_1,\ldots,d_p\}$ can be broken into consecutive intervals $I_1,\ldots,I_{2q+1}$ such that, for $i\in [0,q]$,
\[
\begin{cases}
I_{2i+1}\cap N(\{B_1,\ldots,B_l\})=I_{2i+1} \\
I_{2i}\cap N(\{B_1,\ldots,B_l\})=\emptyset
\end{cases}
\]
The claim is shown by an induction on $q$.
The base case $q=0$, follows directly from Observation~\ref{obs.aux.2} with $\mathcal{B}_{1,2}=\{B_1,\ldots,B_l\}$.

Assume $q\geq 1$. 
Let $\mathcal{P}\subset {[0,q] \choose 2}$ be the set of pairs of indices $(i,j)$ for which there exists a vertex in $B_1,\ldots,B_l$ having neighbors both in $I_{2i+1}$ and in $I_{2j+1}$.
If there exists an index $i$ with no pair of the type $(i,j)\in \mathcal{P}$, then there exists a subset $\mathcal{B}\subset \{B_1,\ldots,B_l\}$ only connected to $I_{2i+1}$, so $N(\mathcal{B})=I_{2i+1}$. We can remove $\mathcal{B}$ from $\{B_1,\ldots,B_l\}$, assume the result by induction on $q-1$, and then add another inside/outside switch concerning exactly $\mathcal{B}$. In particular, we may assume that $\mathcal{P}$ is non-empty.

Let $(i_0,j_0)\in \mathcal{P}$, $i_0<j_0$, be a pair with minimal separation (minimal $|i_0-j_0|$). The induction follows after showing that there exists a set $\mathcal{B}\subset \{B_0,B_{l+1},\ldots,B_{k}\}$ such that $N(\mathcal{B})=I_{2(i_0)+2}$. Indeed, by Observation~\ref{obs.aux.2} shifting $\{B_1,\ldots,B_l\}\cup \mathcal{B}$ in $S_1$ creates another plane embedding $S_{3}$ of $S_v$ with the property that $N(\{B_1,\ldots,B_l\}\cup \mathcal{B})$ has at least two less intervals. The sequence of changes exists then by induction from $S_1$ to $S_3$. By switching again the whole set $\mathcal{B}$ in $S_3$ we obtain $S_2$ and the result follows.


Assume $j_0-i_0=1$. The first part of Observation~\ref{obs.aux.1}, and the fact that $N(\{B_0,B_{l+1},\ldots,B_{k}\})\cap (I_{2(i_0)+1}\cup I_{2(j_0)+1})=\emptyset$, shows the existence of the set $\mathcal{B}\subset \{B_0,B_{l+1},\ldots,B_{k}\}$ with $N(\mathcal{B})=I_{2(i_0)+2}$.

Assume now $|i_0-j_0|>1$. The minimality of $(i_0,j_0)$ shows the existence of a $B''\in \{B_1,\ldots,B_l\}$ such that $\emptyset \neq  N(B'')\cap I_{2(i_0)+3}\owns d_i$
and $ \emptyset\neq N(B'')\cap \linebreak[1](\{d_1,\ldots,d_t\}\setminus \linebreak[0]\{I_{2(i_0)+1},\ldots,\linebreak[0]I_{2(j_0)+1}\})\owns\linebreak[0] d_j$. Then either
\[
\{d_i,d_{i+1},\ldots,d_j\}\cap \{I_{2(i_0)+1},\ldots,I_{2(j_0)+1}\}\subset \{I_{2(i_0)+1},I_{2(i_0)+2},I_{2(i_0)+3}\}\]
or
\[\{d_j,d_{j+1},\ldots,d_i\}\cap \{I_{2(i_0)+1},\ldots,I_{2(j_0)+1}\}\subset \{I_{2(i_0)+1},I_{2(i_0)+2},I_{2(i_0)+3}\},\]
with indices in $d_{\cdot}$ modulo $t$. In either of the cases, we use Observation~\ref{obs.aux.1}, and the fact that $N(\{B_0,B_{l+1},\ldots,B_{k}\})\cap (I_{2(i_0)+1}\cup I_{2(j_0)+1})=\emptyset$, to find the sought after set $\mathcal{B}\subset \{B_0,B_{l+1},\ldots,B_{k}\}$ with $N(\mathcal{B})=I_{2(i_0)+2}$. This finishes the induction step and the proof of Claim~\ref{cl.4}.
\end{proof}

By Claim~\ref{cl.4}, we may assume that $S_1$ differs from $S_2$
by an inside/outside status switch of the vertices $\{B_1,\ldots,B_l\}$ which are attached to the consecutive set of darts $\{d_1,d_2,\ldots,d_j\}$.

Let $C_1$, resp. $C_2$, be the map induced by the vertices in $\text{Mv}(M_1)$, resp. $\text{Mv}(M_2)$, coming from edges in the connected components $\{B_1,\ldots,B_l\}$. Let $C_0$ be the map induced in $\text{Mv}(M_1)$ and $\text{Mv}(M_2)$ by the edges in the components $B_0,B_{l+1},\ldots,B_k$.
Since $B_0,B_1,\ldots,B_k$ induce a partition on the edges in the map $M$, $C_0$ along with $C_1$, resp. $C_2$, induce a partition on the vertices of $\text{Mv}(M_1)$, resp. $\text{Mv}(M_2)$.

Recall that the edges in $\text{Mv}(M_i)$ correspond to pairs of darts consecutive in the vertex permutation $\tau$ of $M_i$ (notice that the next dart of $d$ may be $d$ itself). Hence, 
\begin{itemize}
	\item 
if two darts are consecutive in a vertex rotation other than the corresponding to $v$, its corresponding edge belongs entirely to $C_0$ or to $C_1$ (or to $C_2$). 
\item For $i\in [1,j-1]$, both consecutive edges in $M$, $\{d_i,\alpha(d_i)\}$ and $\{d_{i+1},\alpha(d_{i+1})\}$ belong to $\{B_1,\ldots,B_l\}$, hence the corresponding edge lie between two vertices of $C_1$ in $\text{Mv}(M_1)$, resp. $C_2$ in $\text{Mv}(M_2)$.
\item For $i\in [j,t-1]$, both consecutive edges in $M$, $\{d_i,\alpha(d_i)\}$ and $\{d_{i+1},\alpha(d_{i+1})\}$ belong to $\{B_0,B_{l+1},\ldots,B_{k}\}$, hence the corresponding edge lie between two vertices of $C_1$ in $\text{Mv}(M_1)$, resp. $C_2$ in $\text{Mv}(M_2)$.
\item The edges corresponding to the consecutive pairs of darts $(d_{j},d_{j+1})$ and $(d_t,d_1)$ are exactly the edges between $C_0$ and $C_1$, resp. $C_2$.
In particular, they connect the vertex $\{d_{j},\alpha(d_{j})\}\in C_1$, resp. $C_2$, to the vertex $\{d_{j+1},\alpha(d_{j+1})\}\in C_0$, and
the vertex $\{d_{t},\alpha(d_{t})\}\in C_0$ to the vertex $\{d_{1},\alpha(d_{1})\}\in C_1$, resp. $C_2$.
\end{itemize}

The darts $d_t,d_1,\ldots,d_j,d_{j+1}$ from $v$ induce a not self-intersecting path in $\text{Mv}(M_1)$. Remove all the vertices in $C_1$
with the exception of those coming from the edges 
$\{\{d_1,\alpha(d_1)\},\linebreak[0]\ldots,\linebreak[0]\{d_j,\alpha(d_j)\}\}$ in $M$. After the removal, the vertices
$\{\{d_1,\alpha(d_1)\},\ldots,\{d_j,\alpha(d_j)\}\}$ have degree exactly two. In particular, they form a chord in a face $F$. Therefore, $C_1$ and $C_0$ are plane induced submaps and $C_1$ sits inside the bounded face related to $F$ in $C_0$. Then, the exterior face of $C_1$ contains the vertices $\{d_{j},\alpha(d_{j})\}$ and 
$\{d_{1},\alpha(d_{1})\}$, as these are the vertices in $C_1$ that have edges connected to $C_0$, and the face in $C_0$ where $C_1$ lies contains the vertices $\{d_{j+1},\alpha(d_{j+1})\}$ and $\{d_{t},\alpha(d_{t})\}$.

%

Observe that the orientation given by the algorithm $\text{Alg}(\cdot)$ of the edges from $\{B_1,\ldots,B_l\}$ in $M_1$ is exactly the reverse orientation than in $M_2$. Therefore, the cyclic permutations of the edges around the vertices in $C_1$ are exactly the inverses of those in $C_2$. In particular, $C_2$ is also a plane submap (consider the end of the edges from $C_1$, or $C_2$, to $C_0$ as leaf vertices), with the same exterior face as $C_1$ (the part of $C_0$ have remained invariant from $\text{Mv}(M_1)$ to $\text{Mv}(M_1)$).
In particular, any face of $C_1$ remains a face in $C_2$: the internal faces remain internal, and the external face remains external. Thus, the partition of the external face into two parts (between the vertex $d_1$ and $d_j$ and between $d_j$ and $d_1$) remains, but the directions are reversed.
That is to say, if in $\text{Mv}(M_1)$ the two faces that involved vertices from $C_0$ and $C_1$ were $f_1=d_1,[\text{vertices in }C_0]_0,d_j,[\text{vertices in }C_1]_1, d_1$ and $f_2=d_j,[\text{vertices in }C_0]_2,d_1,[\text{vertices in }C_1]_3, d_j$, then in $\text{Mv}(M_2)$ the corresponding faces are
$f_1'=d_1,[\text{vertices in }C_0]_0,d_j,[\text{vertices in }C_1]_3^{-1}, d_1$ and $f_2=d_j,[\text{vertices in }C_0]_2,v_1,[\text{vertices in }C_1]_1^{-1}, d_j$. The other faces in $C_0$ remain untouched when seen in $\text{Mv}(M_2)$. Since the number of vertices, faces, and edges remains unaltered from $\text{Mv}(M_1)$ to $\text{Mv}(M_2)$, then $\text{Mv}(M_2)$ is plane as well (use Euler's formula to determine the genus). This finishes the proof of Lemma~\ref{lemma.dif_planar}.
\end{proof}

\emph{Part~\ref{prop.part.4b}.} 
	Let $G$ be the $4$-regular graph underlying the paragraph $\omega$. Since the number of edges of $G$ is linear with respect to the number of characters of $\omega$,
	a graph structure for $G$ in which visiting a neighbor of a vertex in $G$ takes constant time can be obtained in linear time in the number of characters of the $\omega$. The connected components of $G$ can be obtained then in linear time in the number of vertices using a deep-first search. Then the Seifert map of $\omega$ can be obtained in linear time using the graph $G$. Giving a total order on the edges and darts of $M=\text{Sf}(\omega)$ costs linear time.
	Checking bipartitness can also be done in linear time as the number of edges is linear.
	Computing the block tree decomposition of $M$, along with its cut vertices, can be computed in linear time in the number of vertices \cite{hopcroft1973algorithm}.
	
	By Part \ref{prop.part.3}, for every cut vertex $v$, $S_v$ is planar. Finding a planar embedding of a graph can be computed in linear time in the size of its vertex set \cite{hopcroft1974efficient}. The number of vertices of $S_v$ is proportional to the number of darts attached to $v$, and each dart is treated at most once (if the dart is attached to a cut vertex $v$). Therefore, the total number of vertices treated for all the planarity tests is proportional to the number of darts, which is linear in the number of vertices (as otherwise $S_v$ is not planar for some $v$).
	Furthermore, if each $S_v$ is planar, the algorithm does not compute
	a minimal crossing number, which might be costly.
	
	Assigning a parent and orienting the edges of $M$ can be done in linear time in the number of edges, which is the same as the number of characters of $\omega$ if $M=\text{Sf}(\omega)$.
	Checking that a map is planar can be done in linear time in the number of edges (hence linear time in the number of vertices).
	 Thus, the whole algorithm performs in linear time when $\omega$ is a Gauss paragraph.

	 \emph{Part \ref{prop.part.2}} follows form Proposition~\ref{prop.clar}. The clockwise/counterclockwise orientation of the Seifert cycles in the plane depends on the orientation on the neighboring cycles and on its relative containment (see the chart in Proposition~\ref{prop.clar}). As mentioned at the beginning of Section~\ref{sec.char}, the label $0$ or $1$ on the vertices of $M$ (which are the Seifert cycles of $\omega$ if $M=\text{Sf}(\omega)$) indicate the clockwise/counterclockwise of the cycle. As all the Seifert cycles in a $2$-connected component of $M$ are pairwise outside from each other (with the exception of the root of the $2$-connected component if the component lies inside of the root), then no odd cycles appear in $\text{Sf}(\omega)$.
\end{proof}

\begin{observation}\label{obs.counter}
	The double occurrence word $\omega=1234534125$ is such that $S_v$ is planar for every $v$, $\text{Sf}(\omega)$ is bipartite, and it satisfies that between the two copies of the same characters, there is an even number of characters. However, one can check that $\omega=1234534125$ is not a Gauss code.
\end{observation}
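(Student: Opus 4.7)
The three positive conditions follow from direct computations on $\omega = 1234534125$, while non-realizability is the substantive part. For the parity condition, between the two occurrences of the character $c\in\{1,2,3,4,5\}$ one counts $6,6,2,2,4$ characters respectively, all even.

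Next, one builds the $2/2$-premap of $\omega$ via Observation~\ref{obs.orient_premap_from_word} and iterates the sign-preserving map $\rho$ of Observation~\ref{obs.algo_algo}(\ref{prop.orient_3}) starting from any positive dart. A short calculation yields exactly two Seifert cycles of length five: $S_1$ collects the positive darts at the odd positions $1,3,5,7,9$, and $S_2$ those at the even positions. Hence $\text{Sf}(\omega)$ is the two-vertex multigraph with five parallel edges (one per character) and no loops. This graph is trivially bipartite with parts $\{S_1\}$ and $\{S_2\}$. Since it has no cut vertices and no loops, the auxiliary graph $S_v$ of Step~\ref{step.def_sv} need only be inspected at the root $v_0$; taking $v_0=S_1$, the single component of $M\setminus\{v_0\}$ is $\{S_2\}$, so $S_{v_0}$ is the $5$-wheel (five dart-vertices in a cycle plus a central vertex $B_0$ adjacent to each), which is planar.

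To establish non-realizability, the cleanest route is to appeal to the Rosenstiehl--Read algebraic characterization \cite{rosenstiehl1978principal}: the interlacement graph of $\omega$ has edge set $\{12,15,25,34,35,45\}$, and the associated interlacement matrix over $\text{GF}(2)$ fails the required parity/rank condition. A self-contained alternative uses Proposition~\ref{prop.inv_op}(\ref{p.bij-orient-map}): embeddings of $\omega$ on an orientable surface are in bijection with orientations of the five edges of $\text{Sf}(\omega)$, so there are only $2^5=32$ candidates to consider. For each orientation $\sigma$ one counts the face-cycles of $\tau'\alpha'$ in $\text{Mv}(\text{Sf}(\omega),\sigma)$ and applies Euler's formula $V-E+F=2-2g$ with $V=5$ and $E=10$; planarity requires $F=7$, and a finite case check confirms this is unattained. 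The main obstacle is precisely this last verification---the other three properties are immediate from the definitions, but proving non-realizability inherently requires either invoking a nontrivial characterization or a finite exhaustive check.
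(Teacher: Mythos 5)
Your verification of the three positive properties is correct and, in fact, more explicit than anything in the paper: the Observation is stated there with no proof at all (``one can check''). Your Seifert-cycle computation is right --- $\rho$ sends the positive dart at position $i$ to the positive dart at the position following the other copy of $\omega_i$, which on $1234534125$ yields the two $5$-cycles on odd and even positions, so $\text{Sf}(\omega)$ is indeed the $5$-fold dipole, loopless and bipartite, and each $S_v$ is the wheel on $5+1$ vertices, hence planar. The parity counts $6,6,2,2,4$ are also correct.

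The one genuine gap is that the decisive claim --- non-realizability --- is only outlined, not executed: you name the Rosenstiehl--Read criterion without stating which condition fails, and you describe the $32$-orientation Euler-characteristic check without reporting its outcome (or even one representative face count). Since this is precisely the ``substantive part'' you identify, the argument is incomplete as written. Two remarks on closing it. First, the $32$-case check is fully justified by the paper's machinery (Observation~\ref{obs.algo_algo} and Proposition~\ref{prop.inv_op}, Part~\ref{p.bij-orient-map}, identify the orientable embeddings of $\omega$ with the orientations of $\text{Sf}(\omega)$), so carrying out even a handful of representative orientations modulo the evident symmetries of the dipole would finish the proof elementarily. Second, the paper's own results let you shrink the search to a single case: since $\text{Sf}(\omega)$ has no cut vertices and no loops, Theorem~\ref{t.characterization} (equivalently Proposition~\ref{prop.give_all_embeddings}) says it suffices to run $\text{Alg}(\text{Sf}(\omega))$, compute the faces of $\text{Mv}(\text{Sf}(\omega),\sigma)$ for that one orientation, and observe $F\neq 7$; the Observation is not used in the proofs of those results, so there is no circularity.
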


\subsection{Characterizing Gauss codes}\label{sec.main_proof}

Before proceeding to the main result, let us show a helpful proposition.

\begin{proposition}\label{prop.clar}
	Let $\omega$ be a Gauss paragraph, and let $\Omega$ be a plane embedding of $\omega$. The plane is assumed to be oriented. Then
	\begin{itemize}
		\item Every Seifert cycle is a close curve with no self intersections. In particular, every Seifert cycle in $\Omega$ is either oriented clockwise or counterclockwise (traversed in the direction given by the orientation on the edges).
		
		\item  No pair of Seifert cycles traverse each other.

		\item Let $c_1$, $c_2$ be two Seifert cycles intersecting at $v$. Let $c_{1,+}$ ($c_{1,-}$) and $c_{2,+}$ ($c_{2,-}$) be the darts going to/positive (leaving/negative) $v$ from $c_{1}$ and $c_2$ receptively Then we have the following properties,
		\begin{center}
			\begin{tabular}{|p{2cm}|c|c|c|p{3cm}|}
				\hline
				in./out. & or. of	$c_1$ & $\Rightarrow$ or. of $c_2$  & $\Rightarrow$ permutation of $v$ in $\Omega$ & $\Rightarrow$ orientation of edge $v$ in $\text{Sf}(\Omega)$ \\
				\hline
				$c_1$ inside $c_2$	& counter. & counter. & $ (c_{1,+},c_{2,+},c_{2,-},c_{1,-})$ & from $c_2$ to $c_1$ \\
				\hline
				$c_1$ inside $c_2$	& clock. & clock. & $(c_{2,+},c_{1,+},c_{1,-},c_{2,-})$ & from $c_1$ to $c_2$ \\
				\hline
				$c_1$ and $c_2$ outside & counter. & clock.&
				$(c_{1,+},c_{2,+},c_{2,-},c_{1,-})$ & from $c_2$ to $c_1$ \\
				\hline
				$c_1$ and $c_2$ outside & clock. & counter.&
				$(c_{2,+},c_{1,+},c_{1,-},c_{2,-})$ & from $c_1$ to $c_2$ \\
				\hline
			\end{tabular}
		\end{center}
		with the addition of the symmetries when $c_2$ is inside $c_1$.
		
	\end{itemize}
\end{proposition}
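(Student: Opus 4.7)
The plan is to use the Seifert smoothing to realize each Seifert cycle as a simple closed curve in the plane, and then to analyze containment via the Jordan curve theorem. At each crossing $v$ of $\Omega$, the four darts at $v$ appear in cyclic order $(d_1,d_2,d_3,d_4)$ with two consecutive positive darts followed by two consecutive negative ones, by the $2/2$-property. The Seifert smoothing of $v$ replaces the crossing by two non-crossing arcs, each pairing a positive dart with the adjacent negative dart in $\tau$; these pairings are exactly those induced by the map $\rho$ of Observation~\ref{obs.algo_algo}. Following the $\rho$-iterates of a positive dart $d$ gives a closed curve in $\Omega$ obtained by alternating between edges of $\Omega$ and smoothing arcs at crossings. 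Because the two smoothing arcs at any crossing are disjoint and the edges are interior-disjoint, this closed curve has no self-intersections; by the Jordan curve theorem it bounds two complementary regions, and the direction induced by $\rho$ makes it either clockwise or counterclockwise with respect to the fixed orientation of the plane, proving the first bullet.

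For the second bullet, each edge of $\Omega$ contains a unique positive dart (at the endpoint where $\sigma=+1$), so each edge lies on exactly one Seifert cycle, and distinct Seifert cycles share no edges. They can therefore only meet at vertices of the map; at a shared vertex $v$ they use disjoint positive/negative dart pairs, and their smoothing arcs occupy complementary sectors at $v$, so the two cycles are locally disjoint as curves. Globally, distinct Seifert cycles are then disjoint simple closed curves in the oriented plane, and by the Jordan theorem they are either nested or have mutually exterior interiors.

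For the table, proceed by a direct case analysis at $v$. By the $2/2$-premap structure there are exactly two possibilities for the cyclic order in $\tau$ at $v$, namely $(c_{1,+},c_{2,+},c_{2,-},c_{1,-})$ and $(c_{2,+},c_{1,+},c_{1,-},c_{2,-})$; these correspond to the two ways of realizing the crossing. In each case the local smoothing arcs of $c_1$ and $c_2$ occupy a pair of opposite sectors at $v$, which can be read off directly. The inside/outside relation between $c_1$ and $c_2$, restricted to a neighborhood of $v$, is then determined by which local sector lies on the interior side of the other cycle; invoking the fact that the interior of a counterclockwise Jordan curve lies on the left of its direction of traversal pins down the compatible combinations of global orientations for $c_1$ and $c_2$ and yields the first three columns of each row. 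Finally, the orientation of the corresponding edge in $\text{Sf}(\Omega)$ follows directly from the formula $\sigma'(c_{i,+}) = -\sigma(\tau(c_{i,+}))$ coming from Definition~\ref{d.seif_map}, i.e.\ from whether the $\tau$-successor of $c_{i,+}$ at $v$ is positive or negative. The hard part is the careful bookkeeping of sign and orientation conventions across the four configurations; once the local pictures are drawn for each cyclic order, each row reduces to a direct verification.
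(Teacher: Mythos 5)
Your proposal is correct and follows essentially the same route as the paper's proof: realize the Seifert cycles via the local smoothing at each crossing to get pairwise non-crossing simple closed curves, then use the Jordan curve theorem and the tangency of the two strands at a shared vertex to pin down the inside/outside and orientation relations in the chart. Your version is somewhat more explicit than the paper's (in particular in tying the smoothing to $\rho$ and the last column to the formula for $\sigma'$ in Definition~\ref{d.seif_map}), but the underlying argument is the same.
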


\begin{proof}[Proof of Proposition~\ref{prop.clar}]
	
	The first and second parts follows by observing that each Seifert cycle is obtained by taking, after arriving to an intersection, the neighboring outgoing edge, instead of the ``straight ahead'' one. In particular, a Seifert cycle is a cycle in any valid plane embedding of $\omega$. Furthermore, the Seifert cycles divide the four edges concurrent into a vertex in pairs such that the two paths concurrent at a vertex do not intersect transversaly (they share a vertex but each path stays locally on the same side with respect to the other). Therefore, no pair of Seifert cycles intersect. This also applies to the intersection of a Seifert cycle with itself.

	In the plane each non-self-intersecting closed curve defines an exterior  and an interior faces. If two of these curves that do not traverse the other share a vertex, then the vertex is point of tangency between the curves. Since in the local cyclic orientation of the edges the pair of incoming and outgoing edges are consecutive, both directions of the cycles are concurrent at the tangency (shared) point. Then the inside/outside relation of the closed curves, together with the orientation of one of the cycle, determines the orientation of the other cycle (as well as the local embedding of the edges around the vertex given of a $2/2$-premap). The relations are given in the chart.
\end{proof}


\begin{theorem}\label{t.characterization}
	Let $\omega$ be a doubly occurrence paragraph. If $\omega$ is embeddable on the plane (a Gauss paragraph) then 
	\begin{itemize}
		\item The procedure $\text{Alg}(\text{Sf}(\omega))$ gives an orientation on the edges, $\sigma$, such that
		$\text{Mv}(\text{Sf}(\omega),\sigma)$ is a plane map and
		\item  Recording the traversed vertices using the ``straight ahead'' edge as the next edge for $\text{Mv}(\text{Sf}(\omega),\sigma)$ gives the paragraph $\omega$.
	\end{itemize}
	
	If $\omega$ is not embeddable on the plane (not a Gauss paragraph), then the map $\text{Mv}(\text{Sf}(\omega),\sigma)$ is not plane.
\end{theorem}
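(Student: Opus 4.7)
The plan is to split the theorem into its three assertions and handle each by leveraging Propositions~\ref{prop.inv_op}, \ref{p.alg_correct}, and \ref{prop.clar}.

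For the first bullet, assume $\omega$ is a Gauss paragraph with some plane embedding $\Omega$. Viewing $\Omega$ as a $2/2$-premap, Proposition~\ref{prop.inv_op} supplies an orientation $\sigma^{*}$ on $\text{Sf}(\omega)$ with $\text{Sf}(\Omega)=(\text{Sf}(\omega),\sigma^{*})$ and $\text{Mv}(\text{Sf}(\omega),\sigma^{*})=\Omega$, which is plane. By Proposition~\ref{p.alg_correct} parts \ref{prop.part.2} and \ref{prop.part.3}, $\text{Sf}(\omega)$ is bipartite and every $S_{v}$ is planar, so $\text{Alg}(\text{Sf}(\omega))$ never triggers a failure branch and returns some orientation $\sigma$. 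By Proposition~\ref{p.alg_correct} part~\ref{prop.part.4a}, to conclude that $\text{Mv}(\text{Sf}(\omega),\sigma)$ is plane it suffices to exhibit one choice of the Step~\ref{step.planar} data under which the algorithm outputs $\sigma^{*}$. This is where Proposition~\ref{prop.clar} does the work: in $\Omega$ each Seifert cycle is a simple closed curve whose clockwise/counterclockwise orientation corresponds exactly to a label $0/1$. The chart in Proposition~\ref{prop.clar} then matches the propagation rule of Step~\ref{step.bip1} (neighbours across an ``inside'' relation share a label; neighbours across an ``outside'' relation flip) and the edge-orientation rule of Step~\ref{step.bip2}. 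Thus, choosing the plane embedding of each $S_{v}$ induced by $\Omega$ itself, the algorithm produces $\sigma^{*}$, completing this bullet.

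For the second bullet, the orientation $\sigma$ selects, via Proposition~\ref{prop.inv_op} part~\ref{p.bij-orient-map}, a specific $2/2$-map inside the $2/2$-premap $\text{Mv}(\text{Sf}(\omega))$. Observation~\ref{obs.algo_algo} point~\ref{obs.algo} (together with Observation~\ref{obs.orient_premap_from_word}) says every $2/2$-map in this premap reads off the same double occurrence paragraph, which is $\omega$. So the straight-ahead walks of $\text{Mv}(\text{Sf}(\omega),\sigma)$ trace out $\omega$ regardless of $\sigma$.

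For the third bullet, argue by contrapositive. If $\text{Mv}(\text{Sf}(\omega),\sigma)$ were plane, then the walks just described would exhibit a plane embedding of $\omega$, forcing $\omega$ to be a Gauss paragraph; this contradicts the assumption.

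The main obstacle is the verification step in the first bullet, namely that the label propagation and edge-orientation rules of the algorithm are \emph{exactly} those dictated by Proposition~\ref{prop.clar} when one walks through the block tree of $\text{Sf}(\omega)$ from the root $v_{0}$. The argument should proceed by induction on the block-tree distance from $v_{0}$: the base case fixes the orientation of the Seifert cycle containing the outer face in $\Omega$ to match label $1$, and the inductive step uses the chart of Proposition~\ref{prop.clar} to show that whatever label the algorithm assigns to a newly reached vertex coincides with the clockwise/counterclockwise status of the corresponding Seifert cycle in $\Omega$, and similarly for the edge orientations (including the same-label case handled by the parent/child convention in Step~\ref{step.bip2}).
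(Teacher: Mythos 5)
Your proposal is correct and follows essentially the same route as the paper: both extract the orientation $\sigma^{*}$ induced by a plane embedding $\Omega$ via Proposition~\ref{prop.inv_op}, reduce to a single Step~\ref{step.planar} choice via Part~\ref{prop.part.4a} of Proposition~\ref{p.alg_correct}, and verify that the label/orientation rules of Steps~\ref{step.bip1}--\ref{step.bip2} reproduce $\sigma^{*}$ using the chart of Proposition~\ref{prop.clar}, with the second and third bullets handled by invertibility exactly as in the paper. The only detail the paper makes explicit that you leave implicit is the normalization of $\Omega$ so that the Seifert cycle chosen as root $v_{0}$ is the counterclockwise-oriented outer face, which anchors the base case of your induction.
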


\begin{proof}

Part~\ref{prop.part.3} in Proposition~\ref{p.alg_correct} shows that if $\omega$ is a Gauss code or paragraph, then each $S_v$ as defined in Step~\ref{step.def_sv} from $\text{Sf}(\omega)$ is planar. 

By iteratively picking a Seifert cycle contained in another Seifert cycle if the original one was not a face, any plane embedding of $\omega$ has a Seifert cycle that is a face.
Consider a plane embedding of $\omega$ so that a Seifert cycle is the exterior face and it is oriented counterclockwise. 
 Let $\Omega$ be  such embedding.

 By Observation~\ref{obs.orient_premap_from_word}, Observation~\ref{obs.algo_algo}, Part~\ref{p.bij-orient-map} of Proposition~\ref{prop.inv_op}, and the definition of the Seifert map of a paragraph (see the paragraph after Definition~\ref{d.seif_map}),  $\Omega$ belongs to the $2/2$-premap $\text{Mv}(\text{Sf}(\omega))$.
 
By the second part of Claim~\ref{cl.2},  $\Omega$ induces a plane embeddings for all $S_v$.
Using Proposition~\ref{p.alg_correct} Part~\ref{prop.part.4a}, the planarity of
$\text{Mv}(\text{Sf}(\omega),\sigma)$ does not depend on the orientation $\sigma$ output by the algorithm.
Hence, it suffices to show that the orientation $\sigma$ on $\text{Sf}(\omega)$, given by the algorithm using the choice (in Step~\ref{step.planar}) of the plane embeddings of $S_v$ induced by $\Omega$, is such that $\text{Mv}(\text{Sf}(\omega),\sigma)$ is plane.
This follows from Proposition~\ref{prop.clar} by observing that the relation being inside/outside together with the orientation of a cycle is transmitted to the neighboring cycles. The relation is tracked by the embeddings of the graphs $S_v$ (inside/outside vertices to $v$) and the distance of a Seifert cycle to the root in its $2$-connected component
and is reproduced in Step~\ref{step.bip1} of $\text{Alg}(M)$.
The cyclic orientation of the edges in the vertices $u$ of $\Omega$ may be deduced from the orientation and relative inclusion between the two Seifert cycles meeting at the vertex $u$ (see the chart in Proposition~\ref{prop.clar}), and are precisely the ones given by the orientation on the edges of $\text{Sf}(\omega)$ when the embeddings on the $S_v$'s induced by $\Omega$ are considered (see Step~\ref{step.bip2} of $\text{Alg}(M)$).


%
%
%
%
%

%
%



Since the operations are invertible, any map in  $\text{Mv}(\text{Sf}(\omega))$ gives a valid embedding of $\omega$ (where the decomposition in closed circuits following the ``straight ahead'' edge walks gives the words of $\omega$). If $\omega$ is not a Gauss paragraph, $\text{Mv}(\text{Sf}(\omega),\sigma)$ cannot be plane.

\end{proof}


\subsection{On all the plane embeddings}

\begin{proposition}\label{prop.give_all_embeddings}
	Let $\omega$ be a Gauss paragraph. Let $\{v_1,\ldots,v_k\}$ be the cut vertices of $M=\text{Sf}(\omega)$ and let $v_0$ be the root of $M$ (a non-cut vertex). Let $S_{v_i}$ be the graphs associated to the cut vertices $v_1,\ldots,v_k$ as defined in Step~\ref{step.def_sv}. Then
	\begin{itemize}
		\item Any collection of plane embeddings of $S_{v_1},\ldots,S_{v_k}$ (where the vertex associated to the root of the graph $v_0$ is in the exterior face of the cycle $v_i$), gives a plane embedding of $\omega$.
		\item Each plane embedding of $\omega$ is given by a collection of plane embeddings of $S_{v_1},\ldots,S_{v_k}$ where the vertex associated to the root of the graph is in the exterior face of the cycle $v_i$.
	\end{itemize}
\end{proposition}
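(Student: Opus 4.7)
The plan is to reduce both bullets to the invertibility of $\text{Sf}(\cdot)$ and $\text{Mv}(\cdot)$ (Proposition~\ref{prop.inv_op}) together with the control on $\text{Alg}(\cdot)$ supplied by Proposition~\ref{p.alg_correct}. By Part~\ref{p.bij-orient-map} of Proposition~\ref{prop.inv_op}, plane embeddings of $\omega$ are in bijection with those orientations $\sigma$ of $\text{Sf}(\omega)$ for which $\text{Mv}(\text{Sf}(\omega),\sigma)$ is plane. Hence the task is to match such orientations with collections of plane embeddings of $S_{v_1},\ldots,S_{v_k}$ satisfying the $v_0$-side condition imposed in Step~\ref{step.planar}.

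For the first bullet, given a collection of plane embeddings of the $S_{v_i}$ (with $B_0$ on the outer face for $v_i\neq v_0$), run $\text{Alg}(\text{Sf}(\omega))$ while taking these embeddings as the choices prescribed by Step~\ref{step.planar}. Since $\omega$ is a Gauss paragraph, Theorem~\ref{t.characterization} already provides at least one orientation $\sigma_0$ output by the algorithm with $\text{Mv}(\text{Sf}(\omega),\sigma_0)$ plane. Part~\ref{prop.part.4a} of Proposition~\ref{p.alg_correct} then lifts this to every admissible Step~\ref{step.planar} choice, so the orientation $\sigma$ produced for the supplied collection also yields a plane $\text{Mv}(\text{Sf}(\omega),\sigma)$; by Theorem~\ref{t.characterization} this is the desired plane embedding of $\omega$.

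For the second bullet, start with a plane embedding $\Omega$ of $\omega$ and write $\Omega=\text{Mv}(\text{Sf}(\omega),\sigma)$ via the above bijection. The second half of Claim~\ref{cl.2} then says $\Omega$ induces a plane embedding of each $S_{v_i}$; inspection of the $1$-subdivision-then-contraction construction used in that claim also shows that $B_0$ lies in the unbounded face of the cycle $(d_1,\ldots,d_t)$ at each cut vertex $v_i\neq v_0$, so the $v_0$-side condition is automatically met. It then remains to check that feeding these induced embeddings back into $\text{Alg}(\text{Sf}(\omega))$ via the first bullet reconstructs precisely $\Omega$.

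This last verification is where I expect the main obstacle to lie: the inside/outside data encoded by the $S_{v_i}$, combined with the distance-based labels propagated by Step~\ref{step.bip1}, must exactly reproduce the clockwise/counterclockwise orientation of each Seifert cycle in $\Omega$ through the chart of Proposition~\ref{prop.clar}, and Step~\ref{step.bip2} must then read off the same edge orientation as $\sigma$. This is essentially a re-use of the central argument in the proof of Theorem~\ref{t.characterization}, applied vertex by vertex against Proposition~\ref{prop.clar}; the only thing that makes it nontrivial is bookkeeping the propagation of labels across $2$-connected blocks through their roots, which is handled exactly as in the algorithm's design.
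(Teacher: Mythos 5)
Your proposal is correct and follows essentially the same route as the paper: the first bullet via Part~\ref{prop.part.4a} of Proposition~\ref{p.alg_correct} combined with Theorem~\ref{t.characterization}, and the second bullet via Claim~\ref{cl.2} and the reconstruction argument already carried out in the proof of Theorem~\ref{t.characterization}. The final verification you flag as the "main obstacle" is exactly the step the paper also discharges by citing that proof, so no new work is needed beyond what you describe.
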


The proof of the first point follows from the Part~\ref{prop.part.4a} in Proposition~\ref{p.alg_correct} after applying the first part of Theorem~\ref{t.characterization}. The second part follows as any plane embedding of $\omega$ induces a plane embedding for each $S_{v_i}$, and these plane embeddings induce the original embedding using the orientation on the edges giving by the algorithm (see the proof of Theorem~\ref{t.characterization}).

\bibliographystyle{plain}
\bibliography{biblio.bib}

\end{document}


\section{Seifert cycles}

\subsection{Seifert cycles of a knot}

Let $f:S^1\to S^2$ be a knot on the plane (no tangent self-intersections), together with a parameterization of $S^1$ in the form of $e^{2\pi it}$, $t\in[0,1]$. Then $f$ defines a $4$-regular graph embedded on the plane, together with an orientation of the edges, and a straight-ahead Eulerian orientation on the edges. Let $M(f)$ be the $4$-regular map defined by $f$ (with the edges oriented) whose vertices are the self-intersections of $f$ and two vertices are connected if the two self-intersections are consecutive  in the parameterization of the $S^1$.

Let $M$ be $4$-regular map (map embedded on an orientable surface) with edges oriented in a compatible way with a straight-ahead Eulerian orientation. Given and edge $e\in M$, we can define the \emph{Seifert cycle of $e$}, $S(e)$, as the unique oriented cycle in $M$ that contains $e$ and such that two consecutive edges in the cycle are not consecutive in the Eulerian orientation of $M$. That is to say, if $e_0=(u,v)$ is an edge in $S(e)$ oriented from $u$ to $v$, then the next edge is the edge $e_1=(v,w)$ that does not leave $v$ straight ahead after $e_0$ in the Eulerian orientation.

The \emph{Seifert cycles of $M$} is the set $S(M)=\{S(e)\}_{e\in E(M)}$.

The Seifert cycles of a knot $K$ where used by Seifert to define a surface whose boundary is the knot $K$.

We can define the Seifert (multi)graph of $M$, $G_S(M)$; the vertices are the seifert cycles and two vertices are connected if they share a vertex of $M$. The number of edges of $G_S(M)$ equals the number of vertices of $M$.

Let us compile some of the properties of the Seifert cycles and the Seifert graph.

\begin{itemize}
	\item If $M$ is planar, then every Seifert cycle partitions the plane into two sides. Every other Seifert cycle is either inside or outside.
	\item If $M$ is planar, then $G_S(M)$ is bipartite. $v$ is a cut vertex of $G_S(M)$ if and only if the Seifert cycle $v$ is not a face of $M$.
	\item $G_S(M)$ has no loops.
\end{itemize}

%
%
%
%
%
%
%
%
%
%
%
%

\subsection{Seifert cycles of a double occurrence word}

Given a double occurrence word $\omega$, we can define the Seifert cycles of $\omega$. Let $\omega_{i,j}$, $j\in \Z_2$, denote the two occurrences of the letter $\omega_i$ in the word $\omega$. Let $\tau_{\omega}(\omega_{i,j})$ denote the next letter in the word $\omega$ after the(unique) occurrence of $\omega_{i,j}$ using the natural cyclic order induce by letters in $\omega$.
Given $\omega_{i,j}$ an occurrence of the letter $\omega_i$, let $\sigma(\omega_{i,j})=\omega_{i,j+1}$ denote the map giving the other occurrence of the word.

Then the Seifert cycle of $\omega_{i,j}$ is defined as the ordered cyclic sequence: $((\sigma\tau)^t(\omega_{i,j})))_{t\in \N}$.\footnote{It is clear that this is well defined and that this partitions the set of pairs $\omega_{i,j},\tau(\omega_{i,j})$.}

Similarly as in the case of a knot, we can define the Seifert cycles of a double occurrence word $S(\omega)$ and the Seifert graph of a word, $G_{S(\omega)}$.

Let us observe that it is longer clear, for instance, if $G_{S(\omega)}$ is bipartite for every double occurrence word $\omega$. For instance, the double occurrence word:
\[
fabceigachpefihbgp
\]
has, in the Seifert graph, a cycle of order $3$. As a sanity check, we can observe that this is not a Gauss code, since between the two copies of the letter $i$ there is an odd number of letters\footnote{The fact that between the two copies of any letter there should be an even number of letters was observed by Gauss, hence the name Gauss codes.}

\section{A map from a bipartite Seifert graph}

Given a bipartite Seifert graph $G$, coming from a double occurrence word $\omega$, we can define a map in the following way.

Given a $4$-regular graph with a straight ahead Eulerian orientation giving an orientation of the edges there are two (and only two) cyclic orientations of the edges around each vertex that gives that the map has a straight ahead orientation. If the two straight ahead passes through the vertex $v$ are $e_1,e_2$ and $e_3,e_4$, then the only two possible cyclic orientation of the edges around a vertex that gives a map compatible with the straight ahead orientation are $e_1,e_2,e_3,e_4$ and $e_1,e_4,e_3,e_2$, since each pair of edges $e_1$, $e_2$, and $e_3,e_4$ should be interlaced by the other pair, and the cyclic nature of the orientation allows us to always begin with $e_1$. Thus, giving a map compatible with the straight ahead orientation is equivalent to give, for any vertex, one of this two orientations.

\begin{itemize}
	\item Let $v$ not be a cutting vertex of $G$. Root $G$ at $v$. Let $L$ be a list containing the neighbors of $v$ and yet unexplored vertices of the graph. For every intersection point in $v$, if $e_1,e_2$ and $e_3,e_4$ are the two pairs of consecutive edges of the straight-ahead orientation given by the Eulerian orientation of the graph, $e_1$ and $e_3$ in, and $e_2, e_4$ outgoing edges, and the edges seeing by the Seifert cycle are $e_1$ and $e_4$, then we give the orientation to the intersection point of:
	\[
	e_1,e_4,e_2,e_3
	\]
	\item 
\end{itemize}

\begin{enumerate}
	\item \label{st.1} Let $v_0$ not be a cutting vertex of $G$. 
	\item \label{st.2} Let $L$ be a list with the neighbors of $v_0$ in $G$.
	\item \label{st.25} Let $W$ be the list of intersection points in $v_0$.
	\item \label{st.3} Orient $v_0$ counterclockwise.
	\item \label{st.4} Let $v:=v_0$.
	\item \label{st.5}
	For every intersection point $p$ in $W$ do the following.
	
	If $e_1,e_2$ and $e_3,e_4$ are the two pairs of consecutive edges of the straight-ahead orientation given by the Eulerian orientation of the graph at $p$ ($e_1$ and $e_3$ in, and $e_2, e_4$ outgoing edges) and the edges seeing by the Seifert cycle $v_0$ are $e_1$ and $e_4$, then we give the orientation to $p$ as:
	\[
	e_1,e_4,e_2,e_3
	\]
	if $v$ is oriented counterclockwise and
	\[
	e_1,e_3,e_2,e_4
	\]
	if $v$ is oriented clockwise.
	Remove $p$ from $W$.
	\item Pick the next vertex in $L$.
\end{enumerate}

\subsection{Other algorithm}

Let $M$ be a bipartite map

\begin{enumerate}
	\item Let $T$ be the tree of blocks ($2$-connected components) of $G$.
	\item Let $v_0$ not be a cutting vertex of $G$.
	\item Root $G$ at $v_0$.
	\item For every $v\in V(G)$ define the \emph{parent} of $v$, $p(v)$, as the vertex in the block of $v$ closest to $v_0$.
	\item Let $P$ be the list of all the parents. Order the list in ascending order according to their distance to $v_0$ (the first vertex being $v_0$ and the last being the furtherest away from $v_0$). Observe that every vertex in $P$ is a cut vertex except $v_0$.
	\item Let $I$ be the list of all the intersection points.
	\item Set the orientation of the Seifert cycle $v_0$ as ``counterclockwise''.
	\item For every $u\in P$, processing the list orderly, do the following.
	\begin{enumerate}
		\item Let $L$ be the Seifert cycles in the block of $u$ ordered in ascending order with respect to the distance to $u$.
		\item For every $v\in L$ do the following.
		\begin{enumerate}
			\item Orient $v$. $v$ has the same orientation as its parent (clockwise or counterclockwise) if the distance of $v$ to its parent is odd, and has the reverse orientation if it is even. Since the graph is bipartite, any Seifert cycle is univocally oriented, once the orientation of $v_0$ has been determined.
			\item For every intersection point $q$ of $v$ in $I$ (not processed yet) do the following.
			\begin{enumerate}
				\item \label{st.cru}	If $e_1,e_2$ and $e_3,e_4$ are the two pairs of consecutive edges of the straight-ahead orientation given by the Eulerian orientation of the graph at $q$ ($e_1$ and $e_3$ in, and $e_2, e_4$ outgoing edges) and the edges seeing by the Seifert cycle $v$ are $e_1$ and $e_4$, then we give the orientation to $q$ as:
				\[
				e_1,e_4,e_2,e_3
				\]
				if $v$ is oriented counterclockwise and
				\[
				e_1,e_3,e_2,e_4
				\]
				if $v$ is oriented clockwise.
				Remove $q$ from $W$.
				\item Remove $q$ from $I$.
			\end{enumerate}
		\end{enumerate}
	\end{enumerate}
	
\end{enumerate}

This algorithm defines a unique cyclic order of the edges of every intersection point in any Seifert cycle. Indeed, each intersection point is processed exactly once.

Moreover, we obtain a map that is the embedding of the $4$-regular graph we started with. It is clear that the straight ahead orientation on the graph gives a straight ahead orientation on the map. as the orderings in \ref{st.cru}
preserve the straight ahead nature of the ordering.

It is clear that if the previous procedure gives a planar map, then the double occurrence word is indeed a Gauss code. The following proposition shows the converse also holds.

\begin{proposition}
	If $\omega$ is a double occurrence word coming from a embedding of a knot, then the previous procedure gives a planar embedding of the knot.
\end{proposition}

\begin{proof}
	Select a shadow of the knot in which the exterior face is given by a Seifert cycle oriented counterclockwise. 
	
	This determines the orientation on all the other Seifert cycles using the procedure given above.
	
	The orientation of the Seifert cycles determine uniquely  the orientation of the vertices (cyclic orientation of the edges around each vertex). 
	
	And this is precisely the orientation given in the algorithm.
\end{proof}

%
%
%

\section{Seifert map of a link map}

In this work, a map is a graph $2$-cell embedded on an orientable surface.

\begin{defn}[Map]
\end{defn}

\begin{defn}[Link map]
	A pair of a map and an orientation on the edges $(M,o)$ is said to be a \emph{link map} if
	\begin{itemize}
		\item The underlying (multi) graph is loopless, $4$-regular and connected.\footnote{Connected might not be needed.}
		\item The orientation $o$ induces two incoming and two outgoing edges on each vertex.
		\item For every $v\in V(M)$ and every edge $e$ attached at $v$, the next and the previous edge in the cyclic order of $e$ have different orientations. That is to say, the cyclic order of the orientations of the edges at every vertex $v$ is ``in-in-out-out''.
	\end{itemize}
\end{defn}

\begin{observation}
	Let $G$ be $4$-regular loopless multigraph $G$ together with an orientation $o$ giving $2$ incoming and $2$ outgoing edges at every vertex. Then, from the $6$ possible cyclic orientations of the edges around the vertex, there are $4$ that give a link map. That is, if $a,b,c,d$ are the edges at $v$, with $a,b$ incoming and $c,d$ outgoing, then:
	\[
	(a,b,c,d), (a,b,d,c), (a,d,c,b), (a,c,d,b)
	
	\]
	are possible orderings.
	
	If, moreover, we fix a straight-ahead pairing of the edges at each vertex (in-out/in-out), then we only have $2$ possible orderings.

	
\end{observation}

\section{Previously, in lost...}

the Seifert cycles $S(K)$ are obtained by

A \emph{Gauss code} from the knot $K$ is the the two letter words obtained from labeling the crossings of $K$ and recording the order in which these crossings are traversed.

Given a Gauss code, one can obtain the \emph{Seifert cycles} from it.
Namely, if $abcdeajcfeghighidfbj$ is a Gauss code, then the Seifert cycles are:
\begin{itemize}
	\item $abjcdfea$ $->$ $v_1$
	\item $bcfb$ $->$ $v_2$
	\item $aja$ $->$ $v_3$
	\item $deghid$ $->$ $v_4$
	\item $ghig$ $->$ $v_5$
\end{itemize}
(take the not-straight outward direction after going into a vertex (crossing).

Observe that each of the Seifert cycles partitions the plan into two parts: the inside and the outside. Morover, every other Seifert cycle stays within the same part with respect to any other Seifert cycle (aside from the vertices or crossings, which are shared).

Let $G$ by the (multi) graph induced by the Seifert cycle. That is to say, the vertex set is the set of Seifert cycles. Two Seifert cycles are adjacent if they have a vertex (crossing) in common.

In the example the graph has edges $(1,2),(1,3),(1,2),  (1,4), (1,2),(1,4),(4,5)$.

Observe that a Seifert cycle is a face from the shadow of the knot $K$ if and only if it is not a cut point of the graph. If it is a cut point, there are Seifert cycles on both sides of the graph. In the example the cut vertices are $v_1$ and $v_4$.

One can layout the vertices by layers, also knowing the relations between the orientations of the Seifert cycles. This allows us to count the faces, which helps in knowing the genus of the surface.

This construction can be replicated if given a 2-letter word.

Question: what happens for Seifert cycles on 2-letter words that are not the Gauss code from a plan Knot?

\section{Conjecture}

Let $\overline{G}$ be the reduction of the multigraph of the Seifert to a simple graph replacing the multiple edges between two vertices by a single edge.

Then

\begin{conjecture} 
	Let $\omega$ be a word with two copies per letter. Then $\overline{G}(\omega)$ is a tree if and only if, $\omega$ is the Gauss code of a closed curve in the plain.
\end{conjecture}

\begin{proof}[Proof of ]
	If $\omega$ is a Gauss code of a closed curve, then all the Seifert cycles enclose regions of the plane. Moro ever, the Seifert cycles on both parts of this partition do not intersect. Hence, any Seifert cycle that partitions the plane in two parts in such a way that, in any of those parts one has another Seifert cycle included, then it is a cut vertex.
	of $\overline{G}(\omega)$.
	
	Furthermore, assume that $C$ is a Seifert cycle of $\omega$ such that it is not a cut vertex. Assume it belongs to a cycle in $\overline{G}(\omega)$. Since it is not a cut vertex, there is no other cycle ``inside'' $C$ ($C$ is a face).

	In particular there is a part that do not contain additional Seifert vertices. That is to say, it is the border of a face of the plane in which the curve has cut the plane.

\end{proof}